\documentclass[final]{siamltex}

\usepackage{tikz}
\usepackage{pgfplots}
\usepackage{verbatim}

\usepackage{graphicx}
\usepackage{amssymb}

\renewcommand{\grad}{\mathop{\rm grad}\nolimits}
\renewcommand{\div}{\mathop{\rm div}\nolimits}

\title{Splitting schemes for hyperbolic heat conduction equation}

\author{Petr N. Vabishchevich\thanks{Keldysh Institute of Applied Mathematics, 
  4 Miusskaya Square, 125047 Moscow, Russia ({\tt vabishchevich@gmail.com}).}
}

\begin{document}

\maketitle

\begin{abstract}
Rapid processes of heat transfer are not described by the standard heat conduction equation.  
To take into account a finite velocity of heat transfer, we use the hyperbolic model of heat conduction, 
which is connected with the relaxation of heat fluxes.  
In this case, the mathematical model is based on a hyperbolic equation of second order
or a system of equations for the temperature and heat fluxes.  
In this paper we construct for the hyperbolic heat conduction equation the additive schemes of splitting with respect to directions.  
Unconditional stability of locally one-dimensional splitting schemes is established.  
New splitting schemes are proposed and studied for a system of equations written in terms of the temperature and heat fluxes.
\end{abstract}

\begin{keywords} 
hyperbolic heat conduction equation, finite difference schemes, 
splitting schemes
\end{keywords}

\begin{AMS}
80A20,  65M06, 65M12  
\end{AMS}

\pagestyle{myheadings}
\thispagestyle{plain}
\markboth{PETR N. VABISHCHEVICH}{SPLITTING SCHEMES FOR HYPERBOLIC HEAT}

\section{Introduction}

Linear parabolic theories of diffusion and heat conduction are based on the Fick and Fourier laws, respectively, 
and predict an infinite speed of propagation \cite{crank1979mathematics,incropera1996fundamentals}. 
In this case, the amplitude of propagating perturbations decreases exponentially 
with the distance and the infinite speed of perturbations can often be ignored.  
Nevertheless, in many applied problems the wave nature of heat transfer should be taken into account.  
Since paper \cite{cattaneo1958forme}, various corrections have been proposed for parabolic heat conduction models in order
to eliminate the paradox of infinite speed of perturbation propagation
 \cite{joseph1989heat,casas2010extended,shashkov2004wave}.  

The standard parabolic heat conduction model is based on the explicit representation of the heat flux through the temperature gradient.  
The hyperbolic heat conduction model includes an additional term with the time derivative for the heat flux 
which is proportional to the relaxation tensor.  
More general models (see \cite{joseph1989heat}) in addition includes the relaxation of the temperature gradient.  
Separate attention should be given to the hyperbolic model of convection-diffusion for moving media
\cite{gomez2008hyperbolic,gomez2008mathematical}.

Two possibilities can be highlighted in constructing computational methods for 
the approximate solution of hyperbolic heat transfer problems.  
The first is connected with the transition from a system of  the first order evolutionary 
equations for the temperature and heat flux to a single hyperbolic equation of second order.  
In contrast to the standard parabolic equation of heat conduction there does present a term with the second time derivative.  
The second possibility is based on the usage of the initial scalar-vector system of equations.  

To solve approximately the boundary value problem for the hyperbolic heat conduction equation,
classical numerical methods can be used including finite-difference approximations in space, 
finite volume schemes or finite-element approximations. For instance, three-level difference schemes for the hyperbolic 
heat conduction equation are constructed in  \cite{ciegis-numerical,samarskii1995computational}.
To investigate the stability and convergence of difference schemes, the general theory of stability for operator-difference schemes
is used in \cite{samarskii2001theory,samarskii2002difference}. 
Investigation of the stability on the basis of a priori estimates of the finite-difference solution for 
the model with the relaxation of the temperature gradients is given in  \cite{dai2004unconditionally,zhang2001unconditionally}.
An analysis of possibilities to use the simplest schemes of first and second order for integration 
in time is given in \cite{moosaie2009comparative} for model one-dimensional problems
of hyperbolic heat conduction.  

The system of equations governing thermal processes in terms of the temperature and heat flux 
has a defined structure with conjugated to each other operators.  
Such a structure of the mathematical model makes possible to use 
this feature in the construction of computational algorithms  
\cite{brezzi1991mixed,roberts1991mixed}.
In a number of papers (see, eg, 
\cite{glass1985numerical,shen2003numerical,tamma1991hyperbolic,yang1990characteristics}) 
the hyperbolic nature of mathematical models with heat waves emphasizes 
in using traditional technologies of compressible media dynamics.  
Various finite element methods are applied in papers \cite{gomez2007discontinuous,gomez2007finite} 
for equations of hyperbolic convection-diffusion theory.  
 
Much attention is paid to the construction of the additive schemes (splitting schemes) 
for approximate solving initial-boundary value problems for multi-dimensional partial differential equations 
\cite{marchuk1990splitting,yanenko1971method}.
Transition to a sequence of more simple problems allows to construct, for example, economical 
difference schemes - schemes based on the splitting with respect to spatial variables.
In some cases it is reasonable to perform splitting with respect to subproblems of different nature -
splitting in physical processes.
At present regionally-additive schemes (domain decomposition methods) 
are actively discussed \cite{samarskii1999additive}.  These schemes are oriented to the construction of 
computational algorithms for parallel computers.

Additive difference schemes in general conditions of the splitting of the problem operator into a sum of noncommutative 
non-selfadjoint operators are obtained in the most simple way for the case of two-component splitting.
In this case for the evolutionary equation of first order the classical alternative direction schemes, factorized 
and predictor-corrector schemes are unconditionally stable at weak restrictions.
A more complicated situation takes place in the case of multi-component splitting (splitting into three and more operators).
For these problems the most interesting results are obtained on the basis of the concept of summarized approximation.
The initial problem at the transition from one time level to another is divided into several subproblems, 
and each of these subproblems, in general, do not approximate the initial problem.  
On this way, unconditionally stable schemes of componentwise splitting (locally one-dimensional schemes 
of splitting with respect to spatial variables) are constructed.  

A new class of operator-difference splitting schemes - vector additive schemes - was developed in
papers \cite{abrashin1990variant,vabishchevich1996vector}.  
In this class of schemes we go from the initial scalar problem for one unknown function 
to the problem for a vector, each component of which can be treated as the solution of the problem.  
On this way we construct the full approximation schemes for evolutionary equations of the first 
and second order based on a general multi-component splitting.  
New additive difference schemes for differential-operator equations of the first and second order 
for the general case of splitting with an arbitrary number of pairwise noncommutative
operator terms were constructed in \cite{samarskii1998regularized,samarskii1999additive}
using the principle of regularization.  

The main theoretical results on the stability and convergence of the additive schemes 
were obtained for scalar evolutionary equations of the first order and, in some cases, for second-order equations.  
Splitting schemes for systems of evolutionary equations are of considerable interest for computational practice.  
For standard parabolic and hyperbolic systems of equations with selfadjoint elliptic operators 
additive schemes were constructed in \cite{samarskii2001theory}
using the principle of regularization for difference schemes.  
The Cauchy problem for a special linear system of first order equations in the Hilbert 
space with the conjugate operators (divergence and gradient) is considered in paper \cite{VabSys}.  
Such a structure of equations is characteristic for the considering here problems of hyperbolic heat transfer.  

In the present work there are constructed splitting schemes with respect to spatial variables for 
the approximate solving the equation of hyperbolic heat conduction.  
Unconditionally stable locally one-dimensional difference schemes are constructed here both
for a single heat conduction equation and for the system of equations based on the temperature and heat flux as unknowns. 
This paper is organized as follows.  
In section 2 the differential problem is formulated for the hyperbolic heat conduction. 
Appropriate a priori estimates are obtained for the solution of the hyperbolic equation in the both
above mentioned formulations.   
Approximation in space is discussed in Section 3 for a model problem in a rectangle.
It was shown that the grid operators of divergence and gradient are ajoint each other. 
Standard three-level difference schemes for the hyperbolic heat conduction equation 
are constructed in Section 4. The a priori estimates are derived for the difference solution.
Difference schemes for the system of equations based on the temperature and heat flux as unknowns are considered in Section 5.  
Unconditionally stable schemes are derived via the regularization of explicit-implicit schemes.  
Locally one-dimensional schemes for the hyperbolic heat conduction equation are studied in Section 6.  
Splitting scheme for the system of hyperbolic heat conduction equations are proposed in Section 7.  

\section{Differential problem}
  
Temperature $u(\mathbf{x},t)$  in bounded domain $\Omega$  with boundary  $\partial\Omega$
is governed by the equation  
\begin{equation}\label{2.1}
  c \frac{\partial u}{\partial t} + \div \mathbf{q} = f,
  \quad \mathbf{x} \in \Omega,
  \quad 0 < t \leq T,
\end{equation}
where $\mathbf{x} = (x_1, x_2, ..., x_n)$ is a point in space, 
$t$  is the time  ($T > 0$), and $\mathbf{q} = \mathbf{q}(\mathbf{x},t)$  is the heat flux.  
In (\ ref (2.1)) $c = c(\mathbf{x}) \geq c_0 > 0$ is the specific heat capacity of a medium, 
and  $f = f(\mathbf{x},t)$ is the rate of  volumetric heat sources.  
The standard (parabolic) model of the heat conduction results from the following 
representation for the heat flux (Fourier's law)  
\begin{equation}\label{2.2}
  \mathbf{q} + k \grad u = 0,
\end{equation}
where $k = k(\mathbf{x}) \geq k_0 > 0$  is the thermal conductivity of the medium.  
Substitution of (\ref{2.2})  in (\ref{2.1}) leads us to the parabolic heat conduction equation 
\begin{equation}\label{2.3}
  c \frac{\partial u}{\partial t} - 
  \div( k \grad u ) = f,
  \quad \mathbf{x} \in \Omega,
  \quad 0 < t \leq T,
\end{equation}
supplemented by appropriate boundary and initial conditions.

In the model of the hyperbolic heat conduction instead of  (\ref{2.2}) we use the following relation  
\begin{equation}\label{2.4}
  \mathbf{q} + \nu  \frac{\partial \mathbf{q}}{\partial t} + 
  k \grad u = 0,
\end{equation}
where  $\nu$  is the relaxation parameter for the heat flux.  
From (\ref{2.1}) and (\ref{2.4}) we obtain the hyperbolic heat conduction equation  
\begin{equation}\label{2.5}
  \nu c \frac{\partial^2 u}{\partial t^2} + 
  c \frac{\partial u}{\partial t} - 
  \div( k \grad u ) =
  f + \nu \frac{\partial f}{\partial t},
  \quad \mathbf{x} \in \Omega,
  \quad 0 < t \leq T .
\end{equation}

Consider a model boundary value problem for equation (\ref{2.5})  (system (\ref{2.1}) and (\ref{2.4})),
where the boundary conditions are  as follows
\begin{equation}\label{2.6}
  u(\mathbf{x},t) = 0,
  \quad \mathbf{x} \in \partial \Omega,
  \quad 0 < t \leq T .
\end{equation}
In addition, two initial conditions  are prescribed
\begin{equation}\label{2.7}
  u(\mathbf{x},0) = v_0(\mathbf{x}),
  \quad \frac{\partial u}{\partial t} (\mathbf{x},0) = v_1(\mathbf{x}),
  \quad \mathbf{x} \in \Omega.  
\end{equation}
The simplest a priori estimates for problem (\ref{2.5})--(\ref{2.7}), 
((\ref{2.1}), (\ref{2.4}), (\ref{2.6}), (\ref{2.7})) will be derived now in order
to be our guidelines in the investigation of grid problems.  

Let $(\cdot, \cdot)$  and $\| \cdot \|$ be the scalar product and norm in $\mathcal{H} = L_2(\Omega)$, respectively.  
Multiplying scalarly equation (\ref{2.5}) by $\partial u / \partial t$ in $\mathcal{H}$
we obtain  
\begin{equation}\label{2.8}
  \left ( c \frac{\partial u}{\partial t}, \frac{\partial u}{\partial t}\right ) +
  \frac{\nu}{2} \frac{d}{d t}
  \left ( c \frac{\partial u}{\partial t}, \frac{\partial u}{\partial t}\right ) +
  \frac{1}{2} \frac{d}{d t} (k \grad u, \grad u) =
  \left ( f + \nu \frac{\partial f}{\partial t}, \frac{\partial u}{\partial t}\right ) .   
\end{equation}
The right hand side of  (\ref{2.8}) is estimated as follows  
\begin{equation}\label{2.9}
  \left ( f + \nu \frac{\partial f}{\partial t}, \frac{\partial u}{\partial t}\right ) \leq   
  \left ( c \frac{\partial u}{\partial t}, \frac{\partial u}{\partial t}\right ) +
  \frac{1}{4}\left (c^{-1} \left (f + \nu \frac{\partial f}{\partial t} \right ), f + \nu \frac{\partial f}{\partial t} \right ) .
\end{equation}
From (\ref{2.8}), (\ref{2.9}) we have the inequality  
\begin{equation}\label{2.10}
   \frac{d}{d t} S \leq  \frac{1}{2} \left (c^{-1} f + \nu \frac{\partial f}{\partial t}, f + \nu \frac{\partial f}{\partial t} \right ) .
\end{equation}
Here  
\begin{equation}\label{2.11}
  S(t) = \nu 
  \left ( c \frac{\partial u}{\partial t}, \frac{\partial u}{\partial t}\right ) +
  (k \grad u, \grad u) 
\end{equation}
defines the squared norm for the solution of (\ref{2.5})--(\ref{2.7}) with boundary conditions (\ref{2.6}).
Applying to (\ref{2.10}) the Gronwall lemma, we obtain the desired estimate  
\begin{equation}\label{2.12}
  S(t) \leq  S(0) + 
  \frac{1}{2} \int\limits_0^t 
  \left \|c^{-1/2} \left (f + \nu \frac{\partial f}{\partial t} \right )(\mathbf{x},\theta) \right \|^2 d \theta .
\end{equation}
At $\nu =0$  estimate (\ref{2.12}) degenerates into the corresponding estimate for the solution of parabolic heat equation (\ref{2.3}).

For the system of equations instead of initial conditions (\ref{2.7}) it is more natural to use  
\begin{equation}\label{2.13}
  u(\mathbf{x},0) = v_0(\mathbf{x}),
  \quad \mathbf{q} (\mathbf{x},0) = \mathbf{g}_0(\mathbf{x}),
  \quad \mathbf{x} \in \partial \Omega,  
\end{equation}
ie instead of the rate of temperature variation we define  the heat flux.  
The transition from one to another initial conditions is provided by equation (\ref{2.3}).

To obtain a simple a priori estimate for  system (\ref{2.1}), (\ref{2.4})  we scalarly multiply 
equation (\ref{2.1})  by $u$,  and (\ref{2.4}) - by  $k^{-1} \mathbf{q}$ and sum them.  
This gives  
\begin{equation}\label{2.14}
   \frac{1}{2} \frac{d}{d t} (c u,u) +
   \frac{\nu}{2} \frac{d}{d t} ( k^{-1} \mathbf{q},\mathbf{q}) +
   ( k^{-1} \mathbf{q},\mathbf{q}) = (f,u) . 
\end{equation}
For the right hand side we use the estimate  
\[
  (f,u) \leq \frac{1}{2} (c u,u) + \frac{1}{2} (c^{-1} f,f) .
\]
From (\ref{2.14}) we obtain  
\begin{equation}\label{2.15}
   \frac{d}{d t} G \leq  G + (c^{-1} f,f) ,
\end{equation}
\begin{equation}\label{2.16}
  G(t) = (c u,u) + \nu ( k^{-1} \mathbf{q},\mathbf{q}).
\end{equation}
From (\ref{2.15}) we derive the estimate  
\begin{equation}\label{2.17}
  G(t) \leq \exp(t) G(0) + \int\limits_0^t \exp(t-\theta)
  \|c^{-1/2} f(\mathbf{x},\theta) \|^2 d \theta ,
\end{equation}
which ensures the stability of the solution of system (\ref{2.1}), (\ref{2.4}) with respect to initial data (\ref{2.13})  
and the right hand side.  

\section{Approximation in space}

Let us consider the 2D model problem of the hyperbolic heat conduction in the rectangle  
\[
  \Omega = \{ \ \mathbf{x} \ | \ \mathbf{x} = (x_1, x_2),
  \quad 0 < x_{\alpha} < l_{\alpha}, \quad \alpha = 1,2 \} .
\] 
Let  $q_{\alpha}, \ \alpha = 1,2$ be the Cartesian components of heat flux $\mathbf{q} = (q_1,q_2)$.
The system of equations (\ref{2.1}), (\ref{2.4})  in the coordinate-wise representation 
takes the form 
\begin{equation}\label{3.1}
  c \frac{\partial u}{\partial t} + 
  \sum_{\alpha =1}^{2} \frac{\partial q_{\alpha}}{\partial x_{\alpha}} = f,
\end{equation}
\begin{equation}\label{3.2}
  q_{\alpha} + \nu  \frac{\partial q_{\alpha}}{\partial t} + 
  k \frac{\partial u}{\partial x_{\alpha}} = 0,
  \quad \alpha = 1,2 .
\end{equation}

On the set of functions $u$, satisfying homogeneous boundary conditions (\ref{2.6}),
we define the operators  
\begin{equation}\label{3.3}
  \mathcal{A}_{\alpha} u = \frac{\partial u}{\partial x_{\alpha}},
  \quad \alpha = 1,2 .
\end{equation}
Taking into account that  
\[
  \int\limits_{\Omega} \frac{\partial u}{\partial x_{\alpha}} v d \mathbf{x} =
  - \int\limits_{\Omega} u \frac{\partial v}{\partial x_{\alpha}} d \mathbf{x} ,
\]
we have  
\begin{equation}\label{3.4}
  \mathcal{A}^*_{\alpha} v = - \frac{\partial v}{\partial x_{\alpha}},
  \quad \alpha = 1,2 
\end{equation}
for the conjugate operators.
In view of (\ref{3.3}), (\ref{3.4})  the system of equations (\ref{3.1}), (\ref{3.2})
with boundary conditions (\ref{2.6}) can be written in the following operator form  
\begin{equation}\label{3.5}
  q_{\alpha} + \nu  \frac{d q_{\alpha}}{d t} + 
  k \mathcal{A}_{\alpha} u = 0,
  \quad \alpha = 1,2 ,
\end{equation}
\begin{equation}\label{3.6}
  c \frac{d u}{d t} - 
  \sum_{\alpha =1}^{2} \mathcal{A}^*_{\alpha} q_{\alpha} = f .
\end{equation}
Thus, the system of equations governing the hyperbolic heat conduction does have the operator structure with conjugate operators.

For hyperbolic heat conduction equation (\ref{2.5})  the corresponding operator-differential 
equation has the form  
\begin{equation}\label{3.7}
  \nu c \frac{d^2 u}{d t^2} + 
  c \frac{d u}{d t} + \mathcal{D} u = 
  f + \nu \frac{\partial f}{\partial t} ,
\end{equation}
\begin{equation}\label{3.8}
  \mathcal{D} = \sum_{\alpha =1}^{2} \mathcal{D}_{\alpha},
  \quad \mathcal{D}_{\alpha} u = 
  \mathcal{A}^*_{\alpha} k \mathcal{A}_{\alpha},
  \quad \alpha = 1,2 .  
\end{equation}
Operator $\mathcal{D}$,  as well as its individual terms $\mathcal{D}_{\alpha}, \ \alpha = 1,2$, 
is selfadjoint and positive definite in $L_2(\Omega)$ on the set of functions satisfying 
boundary conditions (\ref{2.3}).

We want to preserve the above operator structure of the differential model for the hyperbolic heat conduction
after its approximation in space.  
For simplicity, we will consider the simplest difference approximations on uniform grids.  
In the considering problems it is natural to use for the scalar and vector unknowns staggered grids, 
where scalar variables and vector components employ their own grids.  
Such a technology is standard for problems of computational fluid dynamics \cite{versteeg2007introduction} 
and electrodynamics \cite{taflove2000computational}.

The temperature is defined at the nodes of a uniform rectangular grid in  $\Omega$:
\[
   \bar{\omega} = \{ \mathbf{x} \ | \ \mathbf{x} = (x_1, x_2),
   \quad x_\alpha = i_\alpha h_\alpha,
   \quad i_\alpha = 0,1,...,N_\alpha,
   \quad N_\alpha h_\alpha = l_\alpha, \quad \alpha = 1,2\} 
\]
and let $\omega$ be a set of internal nodes ($\bar{\omega} = \omega \cup \partial \omega$).  
The components of vector quantities are referred to the corresponding edges of the grid.  
We define  
\[
   \bar{\omega}_1 = \{ \mathbf{x} \ |  
   \ x_1 = (i_1 + 0.5) h_1,
   \ i_1 = 0,1,...,N_1-1,
   \ x_2 = i_2 h_2,
   \ i_2 = 0,1,...,N_2 \} ,
\]
\[
   \bar{\omega}_2 = \{ \mathbf{x} \ | 
   \ x_1 = i_1 h_1,
   \ i_1 = 0,1,...,N_1,
   \ x_2 = (i_2 + 0.5) h_2,
   \ i_2 = 0,1,...,N_2-1 \} 
\]
and $\bar{\omega}_{\alpha} = \omega_{\alpha} \cup \partial \omega_{\alpha}, \ \alpha = 1,2$. 
Component of heat flux $q_{\alpha}, \ \alpha = 1,2$ will be evaluated on the
grid $\bar{\omega}_{\alpha}, \ \alpha = 1,2$ (Fig.\ref{f-1}).

\begin{figure}[ht] 
  \begin{center}
	\begin{tikzpicture}[domain=0:4]
	\draw[thin,color=gray] (0,0) grid (8,8);
	\foreach \x in {0,...,4}
		\foreach \y in {0,...,4}
		 	\filldraw [black] (2*\x,2*\y) circle (0.1);
	\foreach \x in {0,...,4}
		\foreach \y in {0,...,3}
		 	\draw [black] (2*\x,2*\y+1) circle (0.1);
	\foreach \x in {0,...,3}
		\foreach \y in {0,...,4}
		 	\draw [black] (2*\x+0.9,2*\y-0.1) rectangle +(0.2,0.2);
	\end{tikzpicture}
    \caption{Ñåòêè: {\large $\bullet$} --- $\bar{\omega} (u)$, $\square$ --- $\bar{\omega}_1 (q_1)$, 
       {\large$\circ$} --- $\bar{\omega}_2 (q_2)$.} 
	\label{f-1}
  \end{center}
\end{figure}
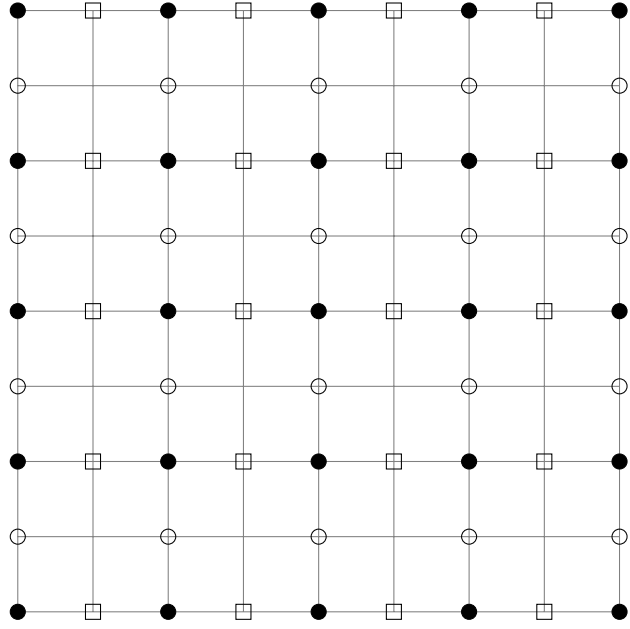

For grid functions  $y(\mathbf{x}) = 0, \ \mathbf{x} \in \partial \omega$
we define the Hilbert space $H = L_2({\omega})$ 
with the scalar product and norm  
\[
  (y,w) \equiv \sum_{{\bf x} \in \omega}
  y({\bf x}) w({\bf x}) h_1 h_2,
  \quad \|y\| \equiv (y,y)^{1/2} .
\]
Similarly, for the grid functions defined on grid $\omega_{\alpha}, \ \alpha = 1,2$, 
we define the Hilbert space $H_{\alpha}, \ \alpha = 1,2$,  where  
\[
  (y,w)_{\alpha} \equiv \sum_{{\bf x} \in \omega_{\alpha}}
  y({\bf x}) w({\bf x}) h_1 h_2,
  \quad \|y\|_{\alpha} \equiv (y,y)_{\alpha}^{1/2}, 
  \quad  \alpha = 1,2.
\]

Let us construct the grid analogs of differential operators 
$\mathcal{A}_{\alpha}, \ \mathcal{A}^*_{\alpha}, \ \alpha = 1,2$,
defined above according to  (\ref{3.3}), (\ref{3.4}).
We will use the standard  \cite{samarskii2001theory} 
 central-difference approximations for derivatives in space.  
We set  
\begin{equation}\label{3.9}
  (A_1 y)(\mathbf{x}) = \frac{y(x_1+0.5h_1, x_2)-y(x_1-0.5h_1, x_2)}{h_1},
  \quad \mathbf{x} \in \omega_1 ,
\end{equation}
so that $A_1: H \rightarrow H_1$.
Similarly, we define $A_2: H \rightarrow  H_2$, where  
\begin{equation}\label{3.10}
  (A_2 y)(\mathbf{x}) = \frac{y(x_1, x_2+0.5h_2)-y(x_1, x_2-0.5h_2)}{h_2},
  \quad \mathbf{x} \in \omega_2 .
\end{equation}
By the construction  we have
\begin{equation}\label{3.11}
  A_{\alpha} u = 
  \mathcal{A}_{\alpha} u + O(h_{\alpha}^2),
  \quad  \alpha = 1,2 .
\end{equation}

Direct calculations verify that for the adjoint operators 
$A^*_{\alpha}: H_{\alpha} \rightarrow H, \ \alpha = 1,2$ we  have the representation  
\begin{equation}\label{3.12}
  (A^*_1 y)(\mathbf{x}) = - \frac{y(x_1+0.5h_1, x_2)-y(x_1-0.5h_1, x_2)}{h_1},
  \quad \mathbf{x} \in \omega ,
\end{equation}
\begin{equation}\label{3.13}
  (A^*_2 y)(\mathbf{x}) = - \frac{y(x_1, x_2+0.5h_2)-y(x_1, x_2-0.5h_2)}{h_2},
  \quad \mathbf{x} \in \omega .
\end{equation}
For sufficiently smooth functions  $u$
\begin{equation}\label{3.14}
  A^*_{\alpha} u = 
  \mathcal{A}^*_{\alpha} u + O(h_{\alpha}^2),
  \quad  \alpha = 1,2 
\end{equation}
holds.

After approximation of system (\ref{3.5}), (\ref{3.6})  in space  we obtain the system of evolutionary equations  
\begin{equation}\label{3.15}
  q_{\alpha} + \nu  \frac{d q_{\alpha}}{d t} + 
  k A_{\alpha} u = 0,
  \quad \mathbf{x} \in \omega_{\alpha} ,
  \quad \alpha = 1,2 , 
\end{equation}
\begin{equation}\label{3.16}
  c \frac{d u}{d t} - 
  \sum_{\alpha =1}^{2} A^*_{\alpha} q_{\alpha} = f ,
  \quad \mathbf{x} \in \omega .
\end{equation}
Similarly,  equation (\ ref (3.7)) is associated with the evolutionary equation 
\begin{equation}\label{3.17}
  \nu c \frac{d^2 u}{d t^2} + 
  c \frac{d u}{d t} + D u = 
  f + \nu \frac{\partial f}{\partial t} ,
  \quad \mathbf{x} \in \omega ,
\end{equation}
\begin{equation}\label{3.18}
  D = \sum_{\alpha =1}^{2} D_{\alpha},
  \quad D_{\alpha}  = A^*_{\alpha} k A_{\alpha},
  \quad \alpha = 1,2 .  
\end{equation}
Taking into account (\ref{3.9}),(\ref{3.10}) and (\ref{3.12}),(\ref{3.13}), 
for grid operators $D_{\alpha}: H \rightarrow H,$ $\alpha = 1,2$
we obtain  
\begin{equation}\label{3.19}
  (D_1 y)(\mathbf{x}) = \frac{1}{h_1} \left (
  k(x_1+0.5h_1, x_2) \frac{y(x_1+h_1, x_2)-y(\mathbf{x})}{h_1}
  \right .
\end{equation}
\[
  \left .
  - k(x_1-0.5h_1, x_2) \frac{y(\mathbf{x})- y(x_1-h_1, x_2)}{h_1}
  \right ), 
  \quad \mathbf{x} \in \omega ,
\]
\begin{equation}\label{3.20}
  (D_2 y)(\mathbf{x}) = \frac{1}{h_2} \left (
  k(x_1, x_2+0.5h_2) \frac{y(x_1, x_2+h_2)-y(\mathbf{x})}{h_2}
  \right .
\end{equation}
\[
  \left .
  - k(x_1, x_2-0.5h_2) \frac{y(\mathbf{x})- y(x_1, x_2-h_2)}{h_2}
  \right ), 
  \quad \mathbf{x} \in \omega .
\]
Similarly  to (\ref{3.11}),(\ref{3.14}), we have  \cite{samarskii2001theory,samarskii1989numerical}
\begin{equation}\label{3.21}
  D_{\alpha} u = 
  \mathcal{D}_{\alpha} u + O(h_{\alpha}^2),
  \quad  \alpha = 1,2 
\end{equation}
in the class of sufficiently smooth coefficients $k$ and functions  $u$.
In addition, in the space of grid functions  $H$
\begin{equation}\label{3.22}
  D_{\alpha} = D^*_{\alpha},
  \quad k_0 \delta_{\alpha} E \leq D_{\alpha} \leq k_1 \Delta_{\alpha} E,
\end{equation}
\[
  \delta_{\alpha} = 
  \frac{4}{h^2_{\alpha}} \sin^2 \frac{\pi h_{\alpha}}{2 l_{\alpha}} ,
  \quad \Delta_{\alpha} = 
  \frac{4}{h^2_{\alpha}} \cos^2 \frac{\pi h_{\alpha}}{2 l_{\alpha}} ,
  \quad  \alpha = 1,2 ,
\]
where $ E $ is the unit (identity) operator and  
$k(\mathbf{x}) \leq  k_1, \ \mathbf{x} \in \Omega$.

\section{Difference schemes for the hyperbolic heat conduction equation}

We consider approximation in  time for the approximate solution of differential-operator equation (\ref{3.17}),
which is supplemented by the initial conditions  
\begin{equation}\label{4.1}
  u(\mathbf{x},0) = v_0(\mathbf{x}),
  \quad \frac{d u}{d t} (\mathbf{x},0) = v_1(\mathbf{x}),
  \quad \mathbf{x} \in \omega.  
\end{equation}
Let us define a uniform grid in time  
\[
  \overline{\omega}_\tau =
  \omega_\tau\cup \{T\} =
  \{t_n=n\tau,
  \quad n=0,1,...,N,
  \quad \tau N=T\} 
\]
and denote $y^n = y(t_n), \ t_n = n \tau$.
Standard three-level difference schemes with the second-order approximation in time will be considered.  

Equation (\ref{3.7})  is approximated by the difference scheme with weights  
\begin{equation}\label{4.2}
  \nu c \frac{u^{n+1} - 2u^{n} + u^{n-1}}{\tau^2} + 
  c \frac{u^{n+1} - u^{n-1}}{2\tau} 
\end{equation}
\[
  + D (\sigma u^{n+1} + (1- 2\sigma) u^{n} + \sigma u^{n-1}) = 
  \varphi^n , 
  \quad n = 1,2, \ldots, N-1 , 
\]
where  
\[
  \varphi^n = f^n + \nu \frac{f^{n+1} - f^{n-1}}{2\tau}, 
\]
with the corresponding initial data  
\begin{equation}\label{4.3}
  u^0=  v_0, 
  \quad  \frac{u^{1} - u^{0}}{\tau} = w_0 . 
\end{equation}

Scheme (\ref{4.2}), (\ref{4.3}) which belongs to the class of three-level operator-difference schemes 
can be investigated on the basis of the Samarskii stability (correctness) theory of operator-difference schemes.
Coincident  necessary and sufficient conditions of the stability of these schemes in various norms are obtained in \cite{samarskii2001theory,samarskii2002difference}.
With this in mind, we give here only the simplest a priori estimates of stability with respect to 
the initial data and right hand side for scheme  (\ref{4.2}), (\ref{4.3}).

\begin{theorem} 
\label{t-1} 
Difference scheme  (\ref{4.2}), (\ref{4.3})
 is unconditionally  stable at $\sigma \geq 0.25$ and for the finite-difference solution we have the estimate  
\begin{equation}\label{4.4}
  S^{n+1} \leq S^n + \frac{\tau }{2} (c^{-1} \varphi^n,\varphi^n) ,
\end{equation}
where  
\begin{equation}\label{4.5}
  S^n = \left (\left (\nu c E + \left (\sigma - \frac{1}{4} \right ) 
  \tau^2 D \right )\frac{u^n - u^{n-1}}{\tau }, 
  \frac{u^n - u^{n-1}}{\tau } \right ) 
\end{equation}
\[
  + \left (D \frac{u^n + u^{n-1}}{2}, \frac{u^n + u^{n-1}}{2} \right ) .
\]
\end{theorem}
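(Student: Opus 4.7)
The plan is to follow the standard discrete energy method for three-level operator-difference schemes, paralleling the continuous a priori estimate (\ref{2.8})--(\ref{2.12}). The discrete analog of multiplying (\ref{2.5}) by $\partial u/\partial t$ will be taking the inner product in $H$ with the symmetric time derivative $(u^{n+1}-u^{n-1})/(2\tau)$. Throughout, self-adjointness of $c$ (trivial) and $D$ (by (\ref{3.22})) will be used silently.

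First, I introduce the shorthand $v^n = (u^n-u^{n-1})/\tau$ and $a_n = (u^n+u^{n-1})/2$, and rewrite the weighted term using the elementary identity
\[
\sigma u^{n+1} + (1-2\sigma)u^n + \sigma u^{n-1} = \frac{u^{n+1}+u^{n-1}}{2} + \left(\sigma - \frac{1}{2}\right)(u^{n+1}-2u^n+u^{n-1}).
\]
Then I take the inner product of (\ref{4.2}) with $(u^{n+1}-u^{n-1})/(2\tau) = (v^{n+1}+v^n)/2$ and multiply through by $2\tau$. The four contributions telescope or are manifestly non-negative: the $\nu c$-term yields $\nu[(cv^{n+1},v^{n+1})-(cv^n,v^n)]$; the $c$-term produces $2\tau(c(v^{n+1}+v^n)/2,(v^{n+1}+v^n)/2)\ge 0$; the two pieces of the rewritten $D$-term yield $\frac{1}{2}[(Du^{n+1},u^{n+1})-(Du^{n-1},u^{n-1})]$ and $(\sigma-\frac{1}{2})\tau^2[(Dv^{n+1},v^{n+1})-(Dv^n,v^n)]$.

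The technical heart of the proof, and what I expect to be the only non-mechanical step, is converting the first $D$-difference into the ``potential energy'' form that appears in (\ref{4.5}). The key algebraic identity, valid for any self-adjoint $D$, is
\[
(Du^{n+1},u^{n+1}) - (Du^{n-1},u^{n-1}) = 2\left[(Da_{n+1},a_{n+1}) - (Da_n,a_n)\right] + \frac{\tau^2}{2}\left[(Dv^{n+1},v^{n+1})-(Dv^n,v^n)\right],
\]
which follows by direct expansion. Plugging it in, the coefficient of $\tau^2[(Dv^{n+1},v^{n+1})-(Dv^n,v^n)]$ becomes exactly $(\sigma - \frac{1}{2}) + \frac{1}{4} = \sigma - \frac{1}{4}$. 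This is precisely where the hypothesis $\sigma \geq \frac{1}{4}$ earns its keep: only then is the resulting quantity $S^n$ a bona fide (non-negative) energy norm. What remains is the exact energy identity
\[
S^{n+1} - S^n + 2\tau\left(c\,\frac{v^{n+1}+v^n}{2},\frac{v^{n+1}+v^n}{2}\right) = 2\tau\left(\varphi^n,\frac{v^{n+1}+v^n}{2}\right).
\]

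To close, I estimate the right-hand side by the weighted inequality $(\varphi^n,w) \leq \frac{1}{4}(c^{-1}\varphi^n,\varphi^n) + (cw,w)$ applied with $w = (v^{n+1}+v^n)/2$, which is the discrete analogue of (\ref{2.9}). The resulting $2\tau(cw,w)$ term on the right cancels the corresponding non-negative term on the left, delivering (\ref{4.4}). No restriction on $\tau$ is used at any stage, so stability is unconditional.
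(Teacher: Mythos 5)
Your proposal is correct and follows essentially the same route as the paper: both multiply the scheme by $u^{n+1}-u^{n-1}=\tau(v^{n+1}+v^n)$, telescope into the same energy $S^n$, and close with the weighted Cauchy inequality so that the dissipative $c$-term absorbs the right-hand side. The only difference is cosmetic — the paper first rewrites the scheme in the canonical three-level form (\ref{4.6}) using the identity $u^n=\frac14(u^{n+1}+2u^n+u^{n-1})-\frac14(u^{n+1}-2u^n+u^{n-1})$ before testing, whereas you test first and then regroup, arriving at the same coefficient $\sigma-\frac14$.
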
 
 
\begin{proof}
To prove this, we introduce the notation  
\[
  \zeta^n = \frac{u^n + u^{n-1}}{2} ,
  \quad \eta^n = \frac{u^n - u^{n-1}}{\tau } .
\]
Taking into account the identities  
\[
  u^n = \frac{1}{4} ( u^{n+1} + 2 u^n + u^{n-1})
  - \frac{1}{4} ( u^{n+1} - 2 u^n + u^{n-1}),
\]
\[
  \sigma u^{n+1} + (1- 2\sigma) u^{n} + \sigma u^{n-1} = 
  u^n  + \sigma ( u^{n+1} - 2 u^n + u^{n-1})
\]
we rewrite (\ref{4.2}) in the form 
\begin{equation}\label{4.6}
  \left (\nu c E + \left (\sigma - \frac{1}{4} \right ) \tau^2 D \right ) \frac{\eta^{n+1} - \eta^{n}}{\tau} + 
  c \frac{\eta^{n+1} + \eta^{n}}{2} + 
  D \frac{\zeta^{n+1} + \zeta^{n}}{2} = \varphi^n .
\end{equation}
We scalarly  multiply in $H$ this equation  by  
\[
  2(\zeta^{n+1} - \zeta^{n}) = \tau (\eta^{n+1} + \eta^{n}) .
\]
This gives    
\begin{equation}\label{4.7}
  \left (\left (\nu c E + \left (\sigma - \frac{1}{4} \right ) \tau^2 D \right )\eta^{n+1}, \eta^{n+1}\right ) +
  (D \zeta^{n+1}, \zeta^{n+1})  
\end{equation}
\[
  - \left (\left (\nu c E + \left (\sigma - \frac{1}{4} \right ) \tau^2 D \right )\eta^{n}, \eta^{n}\right ) -
  (D \zeta^{n}, \zeta^{n}) 
\]
\[
  + \frac{\tau }{2} (c (\eta^{n+1} + \eta^{n}), (\eta^{n+1} + \eta^{n})) = 
  \tau (\varphi^n,(\eta^{n+1} + \eta^{n})) .
\]
If $\sigma \geq 0.25$ then value  
\[
  S^n = \left (\left (\nu c E + \left (\sigma - \frac{1}{4} \right ) \tau^2 D \right )\eta^{n}, \eta^{n}\right ) +
  (D \zeta^{n}, \zeta^{n}) 
\]
defines the squared norm of the difference solution.  
With this notation we obtain the required estimate  (\ref{4.5}).
\qquad\end{proof} 
 
Estimate (\ref{4.5}) for the numerical solution is consistent with estimate  (\ref{2.12}) 
for the solution of the differential problem. 
Using this estimate it is easy to prove in the standard enough way \cite{samarskii2001theory,samarskii2002difference}
that the difference solution converges to the exact one with truncation error $O(\tau^2 +h_1^2 + h_2^2)$  
(with the second order in time and space).  

\section{Difference schemes for the hyperbolic heat conduction governed by the system of equations}

For the approximate solution of the Cauchy problem for system (\ref{3.5}), (\ref{3.6}) 
we use the simplest schemes with weights  
\begin{equation}\label{5.1}
   q_{\alpha}^{\sigma(n)} +
   \nu \frac{q^{n+1}_{\alpha} - q^{n}_{\alpha} }{\tau} + 
   k A_{\alpha} u^{\sigma(n)} = 
   0, \quad  \alpha =1,2,
\end{equation}
\begin{equation}\label{5.2}
   c \frac{u^{n+1} - u^{n} }{\tau} - 
   \sum_{\alpha =1}^{2} A^*_{\alpha} q_{\alpha}^{\sigma(n)} = 
   f^{n+1/2}, \quad  n = 0,1, \ldots, N-1 ,
\end{equation}
where $\sigma$ is a numerical parameter (weight), which is usually $0 \le \sigma \le 1$. 
We have used the notation  
\[
  u^{\sigma(n)} = 
  \sigma u^{n+1} + (1-\sigma) u^{n},
  \quad q_{\alpha}^{\sigma(n)} = 
  \sigma q^{n+1}_{\alpha} + (1-\sigma) q^{n}_{\alpha},
  \quad \alpha =1,2 .
\]
For simplicity, we restrict ourselves to the same weight for all equations of system  (\ref{5.1}), (\ref{5.2}).
Taking into account (\ref{2.13}) we will supply (\ref{5.1}), (\ref{5.2}) with  the initial conditions  
\begin{equation}\label{5.3}
  u^0 = v_0, 
  \quad q_{\alpha}^0 = g_0^{(\alpha)},
  \quad \alpha =1,2 .
\end{equation}
We give the simplest estimates of stability for operator-difference scheme (\ref{5.1})--(\ref{5.3}).
Estimate (\ref{2.17}) is used to guide us.  

\begin{theorem} 
\label{t-2} 
Difference scheme (\ref{5.1})--(\ref{5.3}) is unconditionally stable at
$\sigma \geq 0.5$ and for the numerical solution the following estimate  holds
\begin{equation}\label{5.4}
  G^{n+1} \leq \exp\left (\frac{4 \tau }{T} \right ) G^{n} +
  \tau  \, T \, \exp\left (\frac{2 \sigma -1 }{T} \tau \right )
  (c^{-1} f^{n+1/2},f^{n+1/2}) ,
\end{equation}
where  
\begin{equation}\label{5.5}
  G^{n} = 
  (c u^{n}, u^{n}) +
  \nu \sum_{\alpha =1}^{2}  (k^{-1} q^{n}_{\alpha},q^{n}_{\alpha})_{\alpha} .
\end{equation}
\end{theorem}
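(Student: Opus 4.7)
The plan is to mimic the continuous derivation of estimate (\ref{2.17}) at the discrete level, exploiting again that $A^*_\alpha$ is the genuine adjoint of $A_\alpha$ between $H$ and $H_\alpha$. First, I would take the scalar product of (\ref{5.2}) in $H$ with $2u^{\sigma(n)}$, and the scalar product of each equation (\ref{5.1}) in $H_\alpha$ with $2 k^{-1} q_\alpha^{\sigma(n)}$, and then sum all of them. Because $(A^*_\alpha q_\alpha^{\sigma(n)}, u^{\sigma(n)}) = (q_\alpha^{\sigma(n)}, A_\alpha u^{\sigma(n)})_\alpha$, the cross terms involving $A_\alpha$ and $A^*_\alpha$ cancel exactly, leaving only terms in $u$, $q_\alpha$ and $f$.

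Next, I would use the splitting identity
\[
  y^{\sigma(n)} = \frac{y^{n+1}+y^n}{2} + \Bigl(\sigma-\frac12\Bigr)(y^{n+1}-y^n)
\]
applied to the time differences $(u^{n+1}-u^n,u^{\sigma(n)})$ and $(q_\alpha^{n+1}-q_\alpha^n,q_\alpha^{\sigma(n)})_\alpha$. The first piece yields the telescoping quantity $(G^{n+1}-G^n)/\tau$ from the definition (\ref{5.5}), while the second piece contributes
\[
  (2\sigma-1)\Bigl[(c\,\delta u,\delta u)+\nu\sum_{\alpha=1}^{2}(k^{-1}\delta q_\alpha,\delta q_\alpha)_\alpha\Bigr]/\tau,
\]
with $\delta u=u^{n+1}-u^n$, $\delta q_\alpha=q_\alpha^{n+1}-q_\alpha^n$. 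For $\sigma\ge 1/2$ this bracket is non-negative and may be discarded. The remaining dissipative contribution $2\sum_\alpha(k^{-1}q_\alpha^{\sigma(n)},q_\alpha^{\sigma(n)})_\alpha$ on the left is also non-negative and can be dropped, yielding
\[
  \frac{G^{n+1}-G^n}{\tau}\le 2(f^{n+1/2},u^{\sigma(n)}).
\]

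The right-hand side is controlled by a weighted Cauchy inequality: for a parameter $\lambda>0$ to be fixed later,
\[
  2(f^{n+1/2},u^{\sigma(n)})\le \lambda\,(c u^{\sigma(n)},u^{\sigma(n)})+\lambda^{-1}(c^{-1}f^{n+1/2},f^{n+1/2}),
\]
and then $(cu^{\sigma(n)},u^{\sigma(n)})\le \sigma(cu^{n+1},u^{n+1})+(1-\sigma)(cu^n,u^n)\le \sigma G^{n+1}+(1-\sigma)G^n$ by convexity. Rearranging gives $(1-\tau\lambda\sigma)G^{n+1}\le(1+\tau\lambda(1-\sigma))G^n+\tau\lambda^{-1}(c^{-1}f^{n+1/2},f^{n+1/2})$, and choosing $\lambda$ of order $1/T$ produces a one-step multiplier bounded by $\exp(4\tau/T)$ together with the source factor $\tau T\exp((2\sigma-1)\tau/T)$ claimed in (\ref{5.4}); a discrete analogue of Gronwall's lemma then closes the proof.

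The main obstacle is bookkeeping in the last step: juggling the Cauchy weight $\lambda$ together with the convex-combination bound so that the resulting one-step inequality has \emph{exactly} the form claimed in (\ref{5.4}). The algebraic content of the stability argument (cancellation by adjointness, monotonicity for $\sigma\ge 1/2$, and Cauchy-type estimate of the forcing) is standard; matching the specific constants $4/T$ and $(2\sigma-1)/T$ requires an explicit and slightly artful choice of the parameter $\lambda$ and careful expansion of $(1\pm\tau\lambda)^{-1}$ via $e^{\tau\lambda}$.
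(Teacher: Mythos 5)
Your proposal is correct and follows essentially the same energy argument as the paper's proof: the same test functions $2\tau u^{\sigma(n)}$ and $2\tau k^{-1}q_\alpha^{\sigma(n)}$, the cancellation of the cross terms via $(A^*_\alpha q_\alpha, u) = (q_\alpha, A_\alpha u)_\alpha$, the splitting of $y^{\sigma(n)}$ into a telescoping part plus a $(2\sigma-1)$-multiple of the time increment, non-negativity of the extra terms for $\sigma\ge 1/2$, and a weighted Cauchy inequality followed by a discrete Gronwall step. The only (harmless) deviation is in handling the forcing: the paper keeps the $(2\sigma-1)$ dissipation term and uses it to absorb the piece $(2\sigma-1)\tau^2(f^{n+1/2},(u^{n+1}-u^n)/\tau)$, estimating the remainder $\tau(f^{n+1/2},u^{n+1}+u^n)$ with weight $1/(2T)$ --- which is what produces the exact constants $4\tau/T$ and $(2\sigma-1)\tau/T$ in (\ref{5.4}) under the assumption $2\tau\le T$ --- whereas your single weighted bound on $(cu^{\sigma(n)},u^{\sigma(n)})$ via convexity reaches the same estimate with $\lambda$ of order $1/T$ (and $\sigma\le 1$, as the paper assumes).
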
 
 
\begin{proof}
Scalarly multiply in $H$  equation (\ref{5.2})  by  $2 \tau \, u^{\sigma(n)}$,
and each separate equation (\ref{5.1}) scalarly multiply  in $H_{\alpha}$  by $2 \tau \, k^{-1} q_{\alpha}^{\sigma(n)}, \ \alpha =1,2$ 
and sum them.
Taking into account that  
\[
  2 \tau \, u^{\sigma(n)} = 
  \tau (u^{n+1} + u^{n}) +
  (2\sigma -1) \tau^2 \frac{u^{n+1} - u^{n}}{\tau} ,
\]
we obtain  
\begin{equation}\label{5.6}
  (c u^{n+1}, u^{n+1}) - (c u^{n}, u^{n}) +
  \nu \sum_{\alpha =1}^{2}  (k^{-1} q^{n+1}_{\alpha},q^{n+1}_{\alpha})_{\alpha} -
  \nu \sum_{\alpha =1}^{2}  (k^{-1} q^{n}_{\alpha},q^{n}_{\alpha})_{\alpha} 
\end{equation}
\[
  + (2\sigma -1) \tau^2 \left (c \frac{u^{n+1} - u^{n}}{\tau},
  \frac{u^{n+1} - u^{n}}{\tau} \right )
  + (2\sigma -1) \tau^2 \nu \sum_{\alpha =1}^{2} 
  \left (k^{-1} \frac{q^{n+1}_{\alpha} - q^{n}_{\alpha} }{\tau},
  \frac{q^{n+1}_{\alpha} - q^{n}_{\alpha} }{\tau} \right )
  \]
\[
  + 2 \tau \sum_{\alpha =1}^{2}  
  (k^{-1} q_{\alpha}^{\sigma(n)},q_{\alpha}^{\sigma(n)})_{\alpha} 
  = 2 \tau (f^{n+1/2}, u^{\sigma(n)}) .
\]
For terms in the right hand side of (\ref{5.6}) we have  
\[
  2 \tau (f^{n+1/2}, u^{\sigma(n)}) =
  (2\sigma -1) \tau^2 \left(f^{n+1/2}, 
  \frac{u^{n+1} - u^{n}}{\tau} \right) +
  \tau (f^{n+1/2}, u^{n+1} + u^{n} ) .
\]
We restrict ourselves to schemes with $\sigma \geq 0.5$ and use the estimates  
\[
  (2\sigma -1) \tau^2   \left (f^{n+1/2}, 
  \frac{u^{n+1} - u^{n}}{\tau} \right ) 
\]
\[
  \leq   
  (2\sigma -1) \tau^2   \left ( c \frac{u^{n+1} - u^{n}}{\tau},
  \frac{u^{n+1} - u^{n}}{\tau}\right ) +
  \frac{(2\sigma -1)}{4} \tau^2 (c^{-1} f^{n+1/2},f^{n+1/2}) ,
\]
\[
  \tau (f^{n+1/2}, u^{n+1} + u^{n} ) \leq 
  \frac{\tau}{2 T} (c (u^{n+1} + u^{n}),(u^{n+1} + u^{n}) ) +
  \frac{\tau \, T}{2} (c^{-1} f^{n+1/2},f^{n+1/2}) ,
\]
\[
  (c (u^{n+1} + u^{n}),(u^{n+1} + u^{n}) ) \leq 
  2 (c u^{n+1}, u^{n+1}) + 2 (c u^{n}, u^{n}) .
\]
Substitution in (\ref{5.6}) gives  
\begin{equation}\label{5.7}
  \left (1 - \frac{\tau}{T} \right ) (c u^{n+1}, u^{n+1}) +
  \nu \sum_{\alpha =1}^{2}  (k^{-1} q^{n+1}_{\alpha},q^{n+1}_{\alpha})_{\alpha} 
\end{equation}
\[
  \leq 
  \left (1 + \frac{\tau}{T} \right )
  (c u^{n}, u^{n}) +
  \nu \sum_{\alpha =1}^{2}  (k^{-1} q^{n}_{\alpha},q^{n}_{\alpha})_{\alpha} 
  + \frac{\tau \, T}{2} \left (1 + \frac{2 \sigma -1}{2T}\tau \right ) 
  (c^{-1} f^{n+1/2},f^{n+1/2}) .
\]
Without loss of generality, we assume that $2 \tau \leq T$ and therefore  
\[
  \left (1 + \frac{\tau}{T} \right )
  \left (1 - \frac{\tau}{T} \right )^{-1} \leq \exp\left (\frac{4 \tau }{T} \right ) .  
\]
With this in mind, from (\ref{5.7}) we obtain timelevel-wise stability estimate (\ref{5.4}), (\ref{5.5}).
\qquad\end{proof} 

A priori estimate (\ref{5.4}) is nothing but the grid analog of estimate (\ref{2.17})  and provides 
unconditional stability of the difference scheme with weights (\ref{5.1}), (\ref{5.2})
under natural conditions $\sigma \geq 0.5$. 
Considering the corresponding problem for the error \cite{samarskii2001theory,samarskii2002difference}, 
we prove the convergence of the solution of operator-difference problem (\ref{5.1})--(\ref{5.3})  
to the solution of differential-difference problem (\ref{2.1}), (\ref{2.3}), (\ref{2.13})
at $\sigma \geq 0.5$ with order $\mathcal{O}((2 \sigma -1)\tau + \tau^2)$. 
If $\sigma = 0.5$, we have the second order of convergence with respect to  $\tau$. 

The computational implementation of scheme (\ref{5.1}), (\ref{5.2})  requires 
to solve the following grid problem at new time level $n+1$: 
\begin{equation}\label{5.8}
   \sigma \tau q_{\alpha}^{n+1} +
   \nu q^{n+1}_{\alpha}  + 
   \sigma  \tau k A_{\alpha} u^{n+1} = 
   \chi^n_{\alpha}, \quad  \alpha =1,2,  
\end{equation}
\begin{equation}\label{5.9}
   c u^{n+1} - 
   \sigma  \tau \sum_{\alpha =1}^{2} A^*_{\alpha} q_{\alpha}^{n+1} = 
   \phi^{n} 
\end{equation}
for given $\chi^n_{\alpha}, \ \alpha =1,2$ and $\phi^{n}$.
Substituting $q^{n+1}_{\alpha}$  from equations (\ref{5.8})  in equation (\ref{5.8}), we obtain  
\begin{equation}\label{5.10}
   (\nu +  \sigma  \tau) c u^{n+1} + \sigma^2 \, \tau^2 
   \sum_{\alpha =1}^{2} A^*_{\alpha} A_{\alpha} u^{n+1} = 
   (\nu +  \sigma  \tau)\phi^{n} + \sigma \, \tau
   \sum_{\alpha =1}^{2} A^*_{\alpha} \chi_{\alpha}^{n} .
\end{equation}
Other components of the approximate solution are evaluated after solving grid problem 
(\ref{5.10}) via the explicit formulas of equations (\ref{5.8}).

To preserve the second order of approximation, different grids in time are often employed
for the individual components of the solution.  
The following scheme for system  (\ref{3.5}), (\ref{3.6}) provides an example
\begin{equation}\label{5.11}
   \frac{q^{n+1/2}_{\alpha} + q^{n-1/2}_{\alpha} }{2} + 
   \nu \frac{q^{n+1/2}_{\alpha} - q^{n-1/2}_{\alpha} }{\tau} + 
   k A_{\alpha} u^{n} = 
   0, \quad  \alpha =1,2,
\end{equation}
\begin{equation}\label{5.12}
   c \frac{u^{n+1} - u^{n} }{\tau} - 
   \sum_{\alpha =1}^{2} A^*_{\alpha} q_{\alpha}^{n+1/2} = 
   f^{n+1/2}, \quad  n = 0,1, \ldots, N-1 .
\end{equation}
Such explicit schemes are widely used in computational practice.  
A detailed discussion of such schemes in application to problems of electrodynamics is presented, for example, in 
\cite{taflove2000computational},  with references to works of other authors.  
The main drawback of such schemes is connected with restrictions on the time step (conditional stability).  

Considering equation (\ ref (5.12)) at two time levels, we obtain the following equations  
\[
   c \frac{u^{n+1} - u^{n-1} }{2 \tau} - 
   \sum_{\alpha =1}^{2} A^*_{\alpha} 
   \frac{q^{n+1/2}_{\alpha} + q^{n-1/2}_{\alpha} }{2} = 
   \frac{f^{n+1/2} + f^{n-1/2}}{2} ,
\]
\[
   c \frac{u^{n+1} - 2 u^{n} + u^{n-1} }{\tau^2} - 
   \sum_{\alpha =1}^{2} A^*_{\alpha} 
   \frac{q^{n+1/2}_{\alpha} - q^{n-1/2}_{\alpha} }{\tau } =
   \frac{f^{n+1/2} - f^{n-1/2}}{\tau} .
\]
Taking into account equation (\ref{5.11}), we derive  
\begin{equation}\label{5.13}
  \nu c \frac{u^{n+1} - 2u^{n} + u^{n-1}}{\tau^2} + 
  c \frac{u^{n+1} - u^{n-1}}{2\tau} + 
  \sum_{\alpha =1}^{2} A^*_{\alpha} k A_{\alpha} u^{n}
\end{equation}
\[
   =  \frac{f^{n+1/2} + f^{n-1/2}}{2} +
   \nu \frac{f^{n+1/2} - f^{n-1/2}}{\tau} .
\]
Thus we have the explicit approximation of hyperbolic equation (\ref{4.2})  with  $\sigma = 0$.

Unconditionally stable (at $\sigma \geq 0.25$) scheme with weights  
\[
  \nu c \frac{u^{n+1} - 2u^{n} + u^{n-1}}{\tau^2} + 
  c \frac{u^{n+1} - u^{n-1}}{2\tau} 
  + \sum_{\alpha =1}^{2} A^*_{\alpha} k A_{\alpha} 
  (\sigma u^{n+1} + (1- 2\sigma) u^{n} + \sigma u^{n-1})
\]
\[
   =  \frac{f^{n+1/2} + f^{n-1/2}}{2} +
   \nu \frac{f^{n+1/2} - f^{n-1/2}}{\tau} 
\]
is equivalent to the following scheme  for system (\ref{3.5}), (\ref{3.6}), 
if in scheme (\ref{5.11}), (\ref{5.12})  instead of (\ref{5.11}) we use 
\begin{equation}\label{5.14}
   \frac{q^{n+1/2}_{\alpha} + q^{n-1/2}_{\alpha} }{2} + 
   \nu \frac{q^{n+1/2}_{\alpha} - q^{n-1/2}_{\alpha} }{\tau} 
\end{equation}
\[
   + k A_{\alpha} (\sigma u^{n+1} + (1- 2\sigma) u^{n} + \sigma u^{n-1}) = 
   0, \quad  \alpha =1,2 .
\]

Scheme (\ref{5.12}), (\ref{5.14}) is not very convenient for the practical usage.  
Its main drawback results from the explicit coupling of equations for the temperature and heat fluxes.  
We must perform some preliminary work in order to obtain acceptable grid problems for evaluating 
the individual components of the solution at the new time level.
  
Starting from explicit scheme (\ref{5.11}), (\ref{5.12}), 
we can construct unconditionally stable implicit schemes.  
We can do it in the most simple way using the Samarskii principle 
of regularization for difference schemes \cite{samarskii2001theory,samarskii2002difference}, 
which is based on increasing the stability of a scheme via the perturbation of its operators.  
Stability of scheme  (\ref{5.11}), (\ref{5.12})  can be achieved in different ways.  
The most interesting possibility is connected with the multiplicative \cite{SamVabSReg,samarskii1999additive}
perturbation (increasing) of the time derivative operator or perturbation (decreasing) of the spatial variables 
operator for the individual equations of the system.  

Consider the perturbation of equation (\ref{5.12}) in detail.
The implicit scheme can be written as  
\begin{equation}\label{5.15}
   c^{1/2} \left ( E + \sigma \tau^2 
   \sum_{\alpha =1}^{2} A^*_{\alpha} A_{\alpha} \right ) c^{1/2} \,
   \frac{u^{n+1} - u^{n} }{\tau} - 
   \sum_{\alpha =1}^{2} A^*_{\alpha} q_{\alpha}^{n+1/2} = 
   f^{n+1/2} .
\end{equation}
The perturbation has  the order of $\mathcal{O}(\tau^2)$ and therefore 
regularized scheme  (\ref{5.11}), (\ref{5.15})  remains in the class of schemes with the second order approximation.  
The operator at the time derivative is selfadjoint and positive definite.  

\begin{theorem} 
\label{t-3} 
Difference scheme (\ref{5.11}), (\ref{5.15})
 is unconditionally stable at $\nu c_0 \sigma \geq 0.25$ and for the difference solution 
we have estimate (\ref{4.4}) where
\begin{equation}\label{5.16}
  S^n = \left (\left (
  \nu c^{1/2}(E + \sigma \tau^2 D)c^{1/2} - 
  \frac{\tau^2}{4}  D \right )\frac{u^n - u^{n-1}}{\tau }, 
  \frac{u^n - u^{n-1}}{\tau } \right ) 
\end{equation}
\[
  + \left (D \frac{u^n + u^{n-1}}{2}, \frac{u^n + u^{n-1}}{2} \right ) ,
\]
\[
  \varphi^n = \frac{f^{n+1/2} + f^{n-1/2}}{2} +
   \nu \frac{f^{n+1/2} - f^{n-1/2}}{\tau} . 
\]
\end{theorem}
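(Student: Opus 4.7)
The plan is to convert the coupled two-level system (\ref{5.11}), (\ref{5.15}) into an equivalent three-level scalar scheme in $u$ alone, and then adapt the energy argument of Theorem~\ref{t-1}. Writing (\ref{5.15}) at the steps $n-1\to n$ and $n\to n+1$, their half-sum and their $\tau$-scaled difference produce, respectively, the combinations $(q^{n+1/2}_\alpha+q^{n-1/2}_\alpha)/2$ and $(q^{n+1/2}_\alpha-q^{n-1/2}_\alpha)/\tau$ under $\sum_\alpha A^*_\alpha$. Both are read off from (\ref{5.11}), and summing the half-sum equation with $\nu$ times the difference equation substitutes the whole left-hand side of (\ref{5.11}) inside $\sum_\alpha A^*_\alpha$, producing $Du^n$. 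The outcome is structurally identical to (\ref{5.13}), but with the scalar coefficient $c$ on the time-derivative terms promoted to the self-adjoint positive operator $B := c^{1/2}(E+\sigma\tau^2 D)c^{1/2}$:
\begin{equation*}
\nu B\,\frac{u^{n+1}-2u^n+u^{n-1}}{\tau^2}+B\,\frac{u^{n+1}-u^{n-1}}{2\tau}+Du^n=\varphi^n.
\end{equation*}

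Next, as in the proof of Theorem~\ref{t-1}, I would set $\zeta^n=(u^n+u^{n-1})/2$ and $\eta^n=(u^n-u^{n-1})/\tau$ and use the identity $u^n=(\zeta^{n+1}+\zeta^n)/2-(\tau/4)(\eta^{n+1}-\eta^n)$ to absorb $Du^n$ into the time-derivative block. The equation takes the canonical form
\begin{equation*}
\Bigl(\nu B-\frac{\tau^2}{4}D\Bigr)\frac{\eta^{n+1}-\eta^n}{\tau}+B\,\frac{\eta^{n+1}+\eta^n}{2}+D\,\frac{\zeta^{n+1}+\zeta^n}{2}=\varphi^n,
\end{equation*}
which is exactly (\ref{4.6}) with $\nu c E+(\sigma-1/4)\tau^2 D$ replaced by $\nu B-(\tau^2/4)D$ and $c$ on the middle term replaced by $B$. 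Multiplying in $H$ by $\tau(\eta^{n+1}+\eta^n)=2(\zeta^{n+1}-\zeta^n)$ and using self-adjointness of $B$ and $D$ telescopes the outer terms into $S^{n+1}-S^n$ with $S^n$ as in (\ref{5.16}). The middle term becomes $(\tau/2)(B(\eta^{n+1}+\eta^n),\eta^{n+1}+\eta^n)$; bounding the right-hand side by Young's inequality relative to the positive operator $c$, namely $\tau(\varphi^n,\eta^{n+1}+\eta^n)\leq (\tau/2)(c(\eta^{n+1}+\eta^n),\eta^{n+1}+\eta^n)+(\tau/2)(c^{-1}\varphi^n,\varphi^n)$, the quadratic-form part is absorbed because $B\geq c$ (which follows from $D\geq 0$), giving exactly (\ref{4.4}).

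The main obstacle is verifying that the operator $\nu B-(\tau^2/4)D$ appearing inside $S^n$ is non-negative, which is what pins down the threshold $\nu c_0\sigma\geq 1/4$. Expanding,
\begin{equation*}
\nu B-\frac{\tau^2}{4}D=\nu c+\nu\sigma\tau^2\,c^{1/2}Dc^{1/2}-\frac{\tau^2}{4}D,
\end{equation*}
one needs a lower bound of the form $c^{1/2}Dc^{1/2}\geq c_0 D$. For constant $c$ this is an equality and the residual inequality $\nu c_0 E+(\nu c_0\sigma-1/4)\tau^2 D\geq 0$ is immediate under the hypothesis. For spatially variable $c\geq c_0$, this is the genuinely technical step, since multiplication by $c^{1/2}$ does not commute with $D=\sum_\alpha A_\alpha^* k A_\alpha$; this is precisely where the scalar bound $c_0$, rather than any finer quantity, is forced to enter. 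Once this positivity is in hand, $S^n$ is a squared norm on $(\eta^n,\zeta^n)$ and the argument closes.
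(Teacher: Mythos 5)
Your route is the paper's route: eliminate the fluxes from (\ref{5.11}), (\ref{5.15}) by taking the half-sum and the divided difference of (\ref{5.15}) at consecutive steps, arrive at the three-level scheme (\ref{5.17}) with $B=c^{1/2}(E+\sigma\tau^2D)c^{1/2}$ replacing $c$, and rerun the energy identity of Theorem~\ref{t-1}. The reduction, the canonical form in $(\eta,\zeta)$, the telescoping to $S^{n+1}-S^n$, and the absorption of the source term using $B\geq c$ (which does hold, since $(By,y)=(cy,y)+\sigma\tau^2(Dc^{1/2}y,c^{1/2}y)\geq(cy,y)$) are exactly what the paper does, mostly implicitly via ``similarly to the proof of Theorem~\ref{t-1}.''

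The one step you flag but do not close --- positivity of $R=\nu B-\frac{\tau^2}{4}D$ --- is precisely the step the paper merely asserts, so your write-up is no less complete than the published proof; but be aware that the lemma you reach for, $c^{1/2}Dc^{1/2}\geq c_0D$, is \emph{false} for non-constant $c$. Since $D_\alpha=A_\alpha^*kA_\alpha$, that inequality reads $\sum_\alpha\|k^{1/2}A_\alpha(c^{1/2}y)\|_\alpha^2\geq c_0\sum_\alpha\|k^{1/2}A_\alpha y\|_\alpha^2$, and the product rule allows $A_\alpha(c^{1/2}y)$ to partially cancel: in one dimension with $k=1$, $h=1$, three interior nodes, Dirichlet conditions, $y=(2,1,2)$ and $c^{1/2}=(1,2,1)$ (so $c_0=1$), the left side equals $8$ while the right side equals $10$. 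Letting $\tau\to\infty$ on this $y$ shows $R$ need not stay nonnegative when $\nu c_0\sigma$ is only slightly above $1/4$, so neither your argument nor the paper's establishes the stated threshold for variable $c$; the proof closes as written only for constant $c$, or under a hypothesis formulated through a constant comparing $c^{1/2}Dc^{1/2}$ with $D$ rather than through $c_0$ alone. Everything else in your proposal is sound and matches the paper.
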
 

\begin{proof}
From (\ref{5.11}), (\ref{5.15})  in the standard way we obtain the following 
three-level scheme  
\begin{equation}\label{5.17}
  \nu c^{1/2}(E + \sigma \tau^2 D)c^{1/2} 
  \frac{u^{n+1} - 2u^{n} + u^{n-1}}{\tau^2} 
\end{equation}
\[
  + c^{1/2}(E + \sigma \tau^2 D) c^{1/2} \frac{u^{n+1} - u^{n-1}}{2\tau} 
  + D u^{n}  = \varphi^n , 
  \quad n = 1,2, \ldots, N-1 . 
\]
Further investigation is conducted similarly to the proof of Theorem~\ref{t-1}. 
Under the above restrictions on the weight we have  
\[
  \nu c^{1/2}(E + \sigma \tau^2 D)c^{1/2} - 
  \frac{\tau^2}{4}  D > 0
\]
and we associate the squared norm of the difference solution with $S^n$.
\qquad\end{proof} 

The numerical implementation of scheme  (\ref{5.11}), (\ref{5.15}) 
is based on inversion of the same grid elliptic operator $E + \sigma \tau^2 D$, 
whereas the schemes with weights for the hyperbolic heat conduction equation 
(\ref{4.2}) requires to invert  $c(\nu + \tau) E + \sigma \tau^2 D$.
You can also obtain a grid analog of (\ref{2.16}), (\ref{2.17}) for difference scheme (\ref{5.11}), (\ref{5.15}). 
However, it seems difficult to proof same analog  of  Theorem~\ref{t-2} in this.

\section{Splitting scheme for the hyperbolic heat conduction equation}

The above considered unconditionally stable operator-difference schemes --- (\ref{4.2}) 
for the hyperbolic heat equation and (\ref{5.11}), (\ref{5.15})
for the system of hyperbolic heat conduction, respectively, --- are not very convenient in the numerical implementation.  
We construct the additive schemes for problem (\ref{2.5}), (\ref{2.7}), 
where the transition to a new time level will be connected
with the solution of more simple problems related to the inversion  of 
individual operators $A^*_{\alpha} A_{\alpha},  \ \alpha =1,2$, 
rather than their sum (operator $D$ in (\ref{4.2})).
Taking into account the nature of operators $A^*_{\alpha}, A_{\alpha},  \ \alpha =1,2$, 
we are talking about locally one-dimensional schemes \cite{samarskii2001theory}.

We will focus on using regularized additive schemes of full approximation  \cite{samarskii1998regularized,samarskii1992regularized}.
The principle of regularization of difference schemes is used traditionally widely 
\cite{samarskii2001theory}  to construct stable difference schemes for the
numerical solution of problems governed by partial differential equations.  
Due to small perturbations of the problem operators we can control the growth of the norm 
for the solution at the transition from one time level to another.  

The construction of unconditionally stable difference schemes  via 
the principle of regularization is implemented as follows.  
For the initial problem there is constructed some simple difference scheme 
(producing difference scheme) which does not meet the necessary properties, 
 ie the  scheme is conditionally stable or even absolutely unstable. 
Then the quality of the difference scheme (its stability) is improved via 
perturbations of the difference scheme operators.  

It is natural to consider as the producing schemes the following explicit scheme  
\begin{equation}\label{6.1}
  \nu c \frac{u^{n+1} - 2u^{n} + u^{n-1}}{\tau^2} + 
  c \frac{u^{n+1} - u^{n-1}}{2\tau} + 
  D u^{n}  = \varphi^n , 
\end{equation}
which is complemented by initial conditions  (\ref{4.3}).
The stability of this scheme (see (\ref{4.3}) at $\sigma = 0$)  will be provided 
if the following inequality holds  
\begin{equation}\label{6.2}
  R = \nu c E - \frac{\tau^2}{4} D  > 0. 
\end{equation}
In this case we have  estimate (\ref{4.4}), in which  
\begin{equation}\label{6.3}
  S^n = \left (R \frac{u^n - u^{n-1}}{\tau }, 
  \frac{u^n - u^{n-1}}{\tau } \right ) 
  + \left (D \frac{u^n + u^{n-1}}{2}, \frac{u^n + u^{n-1}}{2} \right ) .
\end{equation}
Taking into account  (\ref{3.22}),  from (\ref{6.2}) we obtain the condition 
for stability of explicit scheme  (\ref{6.1})
\[
  \tau^2 \leq \frac{4 \nu c_0}{k_1 (\Delta_1 + \Delta_2)}
  = \mathcal{O}(h_1^{-2} + h_2^{-2}) .
\]
To increase the stability limit (increase operator $R$), 
we can employ the perturbation of both the first term in $R$ ($\nu c E $)
and second one ($D$). 

In the case of perturbing the operator for the second time derivative  we construct the regularized 
scheme by analogy with (\ref{5.11}), (\ref{5.15}):
\begin{equation}\label{6.4}
  \nu c^{1/2} Q  c^{1/2} \frac{u^{n+1} - 2u^{n} + u^{n-1}}{\tau^2} + 
  c^{1/2} Q  c^{1/2} \frac{u^{n+1} - u^{n-1}}{2\tau} + 
  D u^{n}  = \varphi^n , 
\end{equation}
where operator $Q = Q^* = E + \mathcal{O} (\tau^2)$.
In the construction of additive schemes, we need to take into account 
the structure of the grid operator at new time level.  
Assume that 
\begin{equation}\label{6.5}
  Q = \left (E + \frac{\sigma}{2} \tau^2 A_2^* A_2 \right )
  \left (E + \sigma \tau^2 A_1^* A_1 \right )
  \left (E + \frac{\sigma}{2} \tau^2 A_2^* A_2 \right ) ,
\end{equation}
so that  $Q > E + \sigma \tau^2 D$.
Direct calculations verify that for scheme (\ref{6.4}), (\ref{6.5}) instead of (\ref{6.2}) we have  
\begin{equation}\label{6.6}
  R = \nu c^{1/2} Q  c^{1/2} - \frac{\tau^2}{4} D  
\end{equation}
and  $R > 0$ at $\nu c_0 \sigma \geq 0.25$. 

\begin{theorem} 
\label{t-4} 
Additive-difference scheme (\ref{6.4}), (\ref{6.5})
is unconditionally stable at $\nu c_0 \sigma \geq 0.25$
and estimate  (\ref{4.4}), \ref{6.3}), (\ref{6.6})
is valid for the difference solution.
\end{theorem}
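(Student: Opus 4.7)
The plan is to carry the proof of Theorem~\ref{t-1} through almost verbatim, with the coefficient $\nu c E + (\sigma - \tfrac14)\tau^2 D$ appearing in front of the second difference there replaced everywhere by $R$. Reintroducing $\zeta^n = (u^n + u^{n-1})/2$ and $\eta^n = (u^n - u^{n-1})/\tau$, and using the identity $u^n = \tfrac12(\zeta^{n+1}+\zeta^n) - \tfrac{\tau}{4}(\eta^{n+1}-\eta^n)$ to rewrite the term $D u^n$ in (\ref{6.4}), I would cast the scheme in the $(\eta,\zeta)$ form
\[
R\,\frac{\eta^{n+1}-\eta^n}{\tau}
+ c^{1/2} Q c^{1/2}\,\frac{\eta^{n+1}+\eta^n}{2}
+ D\,\frac{\zeta^{n+1}+\zeta^n}{2} = \varphi^n,
\]
which is the exact analog of (\ref{4.6}), with $R$ from (\ref{6.6}).

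Next, I would take the $H$-inner product of both sides with $\tau(\eta^{n+1}+\eta^n) = 2(\zeta^{n+1}-\zeta^n)$. Because $R$, $c^{1/2}Qc^{1/2}$ and $D$ are all selfadjoint (the first two since $Q=Q^*$ by construction in (\ref{6.5})), the $R$- and $D$-terms telescope exactly to $S^{n+1}-S^n$ with $S^n$ given by (\ref{6.3}), while the middle term contributes the nonnegative dissipation $(\tau/2)\bigl(c^{1/2}Qc^{1/2}(\eta^{n+1}+\eta^n),\eta^{n+1}+\eta^n\bigr)$, which majorizes $(\tau/2)\bigl(c(\eta^{n+1}+\eta^n),\eta^{n+1}+\eta^n\bigr)$ by virtue of $Q\ge E$. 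The right-hand side $\tau(\varphi^n,\eta^{n+1}+\eta^n)$ is then estimated by Cauchy--Schwarz and Young's inequality as in (\ref{2.9}); absorbing one copy of the dissipation into the left-hand side leaves the residual $(\tau/2)(c^{-1}\varphi^n,\varphi^n)$, which is precisely (\ref{4.4}).

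The main obstacle, and the only step that is not a direct transcription from Theorem~\ref{t-1}, is verifying that $S^n$ defined by (\ref{6.3}) really is a squared norm, i.e., that $R=\nu c^{1/2}Qc^{1/2}-(\tau^2/4)D>0$. I would argue this as flagged in the paragraph introducing (\ref{6.6}): the operator inequality $Q>E+\sigma\tau^2 D$ established for (\ref{6.5}) gives, after conjugation by $c^{1/2}$, the bound $\nu c^{1/2}Qc^{1/2}>\nu c+\nu\sigma\tau^2\,c^{1/2}Dc^{1/2}$, and then the same chain of estimates used in Theorem~\ref{t-3} (where the analogous hypothesis $\nu c_0\sigma\ge 1/4$ did exactly this job) shows that the right-hand side dominates $(\tau^2/4)D$ under the stated assumption $\nu c_0\sigma\ge 1/4$. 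Once positivity of $R$ is in hand, the energy identity immediately yields the claimed level-to-level bound, and the remaining estimate (\ref{4.4}) with $S^n$ from (\ref{6.3}) follows.
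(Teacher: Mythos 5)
Your proposal is correct and follows essentially the same route as the paper: the paper gives no separate proof of Theorem~\ref{t-4}, relying on the reduction of (\ref{6.4}) to the canonical three-level form of Theorem~\ref{t-1} with $\nu cE$ replaced by $\nu c^{1/2}Qc^{1/2}$, and on the ``direct calculations'' showing $R=\nu c^{1/2}Qc^{1/2}-\tfrac{\tau^2}{4}D>0$ at $\nu c_0\sigma\ge 0.25$ via $Q>E+\sigma\tau^2 D$ as in Theorem~\ref{t-3} --- exactly the steps you supply. Your observation that $Q\ge E$ guarantees the dissipation term still dominates the right-hand side is the only point the paper leaves entirely implicit, and you handle it correctly.
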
 

The second possibility of constructing unconditionally stable additive 
operator-difference schemes is connected with the perturbation of operator $D$  in explicit scheme (\ref{6.1}).  
Instead of operator $D$, which is defined according to (\ref{3.18}),  we use  
\begin{equation}\label{6.7}
  C = \sum_{\alpha =1}^{2} C_{\alpha},
  \quad C_{\alpha} = A^*_{\alpha} 
    \left (k^{-1} E + \sigma \tau^2 A_{\alpha} A^*_{\alpha} \right )^{-1}
  A_{\alpha},
  \quad \alpha = 1,2 .  
\end{equation}
For these difference operators  
\[
  C_{\alpha} = C^*_{\alpha} < \frac{1}{\sigma \tau^2} E, 
  \quad \alpha = 1,2 .  
\]
Because of this, for  
\begin{equation}\label{6.8}
  R = \nu c E - \frac{\tau^2}{4} C  
\end{equation}
we have $R > 0$ at $\nu c_0 \sigma \geq 0.5$.

\begin{theorem} 
\label{t-5} 
Additive difference schemes  
\begin{equation}\label{6.9}
  \nu c \frac{u^{n+1} - 2u^{n} + u^{n-1}}{\tau^2} + 
  c \frac{u^{n+1} - u^{n-1}}{2\tau} + 
  C u^{n}  = \varphi^n ,
\end{equation}
where operator $C$ is defined according to (\ref{6.7}),  
is unconditionally stable at $\nu c_0 \sigma \geq 0.5$.  Estimate  (\ref{4.4})  is true
for the difference solution  with
\begin{equation}\label{6.10}
   S^n = \left (R \frac{u^n - u^{n-1}}{\tau }, 
  \frac{u^n - u^{n-1}}{\tau } \right ) 
  + \left (C \frac{u^n + u^{n-1}}{2}, \frac{u^n + u^{n-1}}{2} \right ) 
\end{equation}
and $R$  corresponding to  (\ref{6.8}).
\end{theorem}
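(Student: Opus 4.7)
The plan is to mirror the proof of Theorem~\ref{t-1} almost verbatim, systematically replacing the elliptic operator $D$ by the additive regularized operator $C$ of (\ref{6.7}) and treating (\ref{6.9}) as the analogue of the weighted scheme (\ref{4.2}) at weight zero. Under this correspondence, the coefficient $\sigma - 1/4$ that appears in Theorem~\ref{t-1} collapses to $-1/4$, which is exactly the coefficient of $\tau^2 C$ in the definition (\ref{6.8}) of $R$. Each summand $C_\alpha = A_\alpha^*(k^{-1}E + \sigma\tau^2 A_\alpha A_\alpha^*)^{-1} A_\alpha$ is manifestly selfadjoint and non-negative because the middle factor is, so $C$ inherits both properties and all the scalar-product manipulations of the earlier theorem remain available.

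Concretely, I introduce $\zeta^n = (u^n + u^{n-1})/2$ and $\eta^n = (u^n - u^{n-1})/\tau$ and apply the elementary identity $u^n = (\zeta^{n+1}+\zeta^n)/2 - (\tau/4)(\eta^{n+1}-\eta^n)$ to the term $Cu^n$ in (\ref{6.9}). This recasts the scheme in the symmetric form
\[
R\,\frac{\eta^{n+1} - \eta^n}{\tau} + c\,\frac{\eta^{n+1} + \eta^n}{2} + C\,\frac{\zeta^{n+1} + \zeta^n}{2} = \varphi^n ,
\]
with $R$ as in (\ref{6.8}). Taking the scalar product in $H$ with $\tau(\eta^{n+1}+\eta^n) = 2(\zeta^{n+1}-\zeta^n)$ and using the selfadjointness of $R$ and $C$ produces the telescoping identity
\[
S^{n+1} - S^n + \frac{\tau}{2}\bigl(c(\eta^{n+1}+\eta^n),\eta^{n+1}+\eta^n\bigr) = \tau\,(\varphi^n,\eta^{n+1}+\eta^n),
\]
where $S^n$ is precisely the functional in (\ref{6.10}). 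The weighted Cauchy--Schwarz bound $\tau(\varphi^n,\eta^{n+1}+\eta^n) \le (\tau/2)\bigl(c(\eta^{n+1}+\eta^n),\eta^{n+1}+\eta^n\bigr) + (\tau/2)(c^{-1}\varphi^n,\varphi^n)$ then absorbs the dissipative term on the left and delivers (\ref{4.4}).

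The only genuinely new ingredient, and the main obstacle, is proving $R > 0$ under the restriction $\nu c_0\sigma \ge 1/2$. This reduces to the spectral bound $C_\alpha \le (\sigma\tau^2)^{-1} E$ stated after (\ref{6.7}), which I would obtain by a short Cauchy--Schwarz computation applied to the identity $(C_\alpha y, y) = (k^{-1}z, z) + \sigma\tau^2\|A_\alpha^* z\|^2 = (y, A_\alpha^* z)$ with $z = (k^{-1}E + \sigma\tau^2 A_\alpha A_\alpha^*)^{-1} A_\alpha y$: dropping the non-negative $(k^{-1}z,z)$ term yields the quadratic inequality $\sigma\tau^2\|A_\alpha^* z\|^2 \le \|y\|\cdot\|A_\alpha^* z\|$, hence $(C_\alpha y, y) \le \|y\|^2/(\sigma\tau^2)$. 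Summing over $\alpha = 1,2$ gives $C \le 2(\sigma\tau^2)^{-1} E$, so $(\tau^2/4) C \le (2\sigma)^{-1} E \le \nu c_0 E \le \nu c E$ and $R > 0$. Consequently $S^n$ is a genuine squared norm, and the telescoping identity combined with the Cauchy--Schwarz bound closes the argument.
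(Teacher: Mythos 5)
Your proposal is correct and follows essentially the same route the paper takes: Theorem~\ref{t-5} is stated without a written proof, resting on the template of Theorem~\ref{t-1} (explicit case, coefficient $-\tau^2/4$) together with the asserted bound $C_{\alpha} = C_{\alpha}^* < (\sigma\tau^2)^{-1}E$, which yields $R>0$ under $\nu c_0\sigma\ge 0.5$. Your energy identity and telescoping argument reproduce that template faithfully, and your Cauchy--Schwarz computation with $z=(k^{-1}E+\sigma\tau^2A_{\alpha}A_{\alpha}^*)^{-1}A_{\alpha}y$ actually supplies the justification of the operator bound that the paper only asserts.
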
 

The main computational cost in the constructed splitting schemes results from the inversion
of one-dimensional grid operators $k^{-1} E + \sigma \tau^2 A_{\alpha} A^*_{\alpha}, \ \alpha = 1,2$.
The potential advantage of  additive scheme (\ref{6.7}), (\ref{6.9})
in compare with scheme (\ref{6.4}), (\ref{6.5}) is connected primarily with lower computational 
cost during the transition to a new time level.
This advantage is more impressive  for the three-dimensional problems (splitting in three directions).  

\section{Additive schemes for the hyperbolic heat conduction governed by the  system of equations}

In the construction of splitting schemes for the system of equations
governing the hyperbolic heat conduction,  the theory and practice of using additive 
schemes for first order evolutionary equations will be employed.  
We treat the system of equations (\ref{3.15}), (\ref{3.16})  
as a single evolutionary equation for the vector 
$\mathbf{u} \equiv  [u_1, u_2, u _3]^T = [q_1, q_2, u ]^T$:
\begin{equation}\label{7.1}
  \mathbf{B}\frac{d \mathbf{u}}{d t} + \mathbf{A} \mathbf{u} = \mathbf{f}(t),
\end{equation}
where  $\mathbf{f} = [0, 0, f]^T$.
For the elements of operator matrices $\mathbf{B}$ and  $\mathbf{A}$ we have the representation  
\begin{equation}\label{7.2}
  \mathbf{B} = 
  \left[ 
    \begin{array}{ccc}
      \nu k^{-1} & 0 & 0 \\
      0 & \nu k^{-1} &  0 \\
	  0 & 0& c
	\end{array}
  \right],
  \qquad 
  \mathbf{A} = 
  \left[ 
    \begin{array}{ccc}
      k^{-1} & 0 & A_1 \\
      0 & k^{-1} &  A_2 \\
	  - A_1^* & - A_2^* & 0
	\end{array}
  \right] .
\end{equation}
For the direct sum of spaces $\mathbf{H} = H_1 \oplus  H_2 \oplus H$, we set  
\[
  (\mathbf{u}, \mathbf{v}) = \sum_{\alpha =1}^{p} (u_{\alpha},v_{\alpha})_{\alpha} ,
  \quad \|\mathbf{u} \|^2 = \sum_{\alpha =1}^{p} \|u_{\alpha}\|^2_{\alpha} .
\]
In this case, $\mathbf{A} > 0$  in $\mathbf{H}$ and estimate (\ref{2.16}), (\ref{2.17}) 
can be rewritten as  
\begin{equation}\label{7.3}
 \left (\mathbf{B} \mathbf{u}(t),\mathbf{u}(t) \right ) \leq 
 \exp(t)\left (\mathbf{B} \mathbf{u}(0),\mathbf{u}(0) \right ) + 
 \int\limits_0^t \exp(t-\theta)
  \left (\mathbf{B}^{-1} \mathbf{f}(\theta), \mathbf{f}(\theta)
  \right )^2 d \theta .
\end{equation}

To construct locally one-dimensional schemes for the Cauchy problem for (\ref{7.1}), (\ref{7.2}), 
we use the additive representation of operator $\mathbf{A}$ in the form 
\begin{equation}\label{7.4}
  \mathbf{A} = \sum_{\alpha =1}^{p} \mathbf{A}^{(\alpha)} .
\end{equation}
The first variant of  decomposition (\ref{7.4})  corresponds to the selection $p=3$  and  
\begin{equation}\label{7.5}
  \mathbf{A}^{(1)} = 
  \left[ 
    \begin{array}{ccc}
      0 & 0 & A_1 \\
      0 & 0 &  0 \\
	  - A_1^* & 0 & 0
	\end{array}
  \right] ,
  \quad 
  \mathbf{A}^{(2)} = 
  \left[ 
    \begin{array}{ccc}
      0 & 0 & 0 \\
      0 & 0 &  A_2 \\
	  0 & - A_2^* & 0
	\end{array}
  \right] ,
\end{equation}
\[
  \mathbf{A}^{(3)} = 
  \left[ 
    \begin{array}{ccc}
      k^{-1} & 0 & 0 \\
      0 & k^{-1} &  0 \\
	  0 & 0 & 0
	\end{array}
  \right] .
\]
Thus, we separate the individual terms with operators  
$A_{\alpha}, \ A^*_{\alpha}, \ \alpha =1,2$.
The main properties of these operators are connected with their non-negativity 
\[
  \mathbf{A}^{(\alpha)} = - (\mathbf{A}^{(\alpha )})^*,
  \quad \alpha = 1,2,
  \quad \mathbf{A}^{(3)} = (\mathbf{A}^{(3)})^* \geq 0 .
\]
We can consider the splitting of (\ref{7.4}) so that  $p=2$ where  
\begin{equation}\label{7.6}
  \mathbf{A}^{(1)} = 
  \left[ 
    \begin{array}{ccc}
      0.5 k^{-1} & 0 & A_1 \\
      0 & 0.5 k^{-1} &  0 \\
	  - A_1^* & 0 & 0
	\end{array}
  \right] ,
  \quad 
  \mathbf{A}^{(2)} = 
  \left[ 
    \begin{array}{ccc}
      0.5 k^{-1} & 0 & 0 \\
      0 & 0.5 k^{-1} &  A_2 \\
	  0 & - A_2^* & 0
	\end{array}
  \right] .
\end{equation}
In this case $\mathbf{A}^{(\alpha)} \geq 0, \ \alpha = 1,2$
in  space $\mathbf{H}$.

This non-negative property of operators $\mathbf{A}^{(\alpha)}, \ \alpha = 1,2,...,p$ 
in splitting  (\ref{7.4}) allows to use for the approximate solution of the Cauchy 
problem for equation (\ref{7.1}), (\ref{7.2}) different classes of unconditionally stable 
additive operator-difference schemes   \cite{marchuk1990splitting,samarskii1999additive}. 
With regard to our problem of the hyperbolic heat conduction, we can employ the schemes of second-order approximation in time.

For the general case (in (\ref{7.4}) $p>2$) 
the standard additive schemes are based on the concept 
of summarized  approximation.  
To construct the schemes of second order, we arrange computations via the algorithm
\[
   \frac12 \mathbf{A}^{(1)} \rightarrow  \frac12 \mathbf{A}^{(2)} 
   \rightarrow  \cdots \rightarrow  \frac12 \mathbf{A}^{(p)} 
   \rightarrow  \frac12 \mathbf{A}^{(p)}
   \rightarrow  \frac12 \mathbf{A}^{(p-1)} \rightarrow  \cdots \rightarrow  
   \frac12 \mathbf{A}^{(1)} .
\]
The corresponding additive scheme of component-wise splitting seems like this:  
\begin{equation}\label{7.7}
   \mathbf{B}\frac{\mathbf{u}^{n+\alpha /(2p)} - \mathbf{u}^{n+(\alpha-1) /(2p)} }{\tau} + 
   \frac{1}{2} \tilde{\mathbf{A}}^{(\alpha )} 
   (\mathbf{u}^{n+\alpha /(2p)} + \mathbf{u}^{n+(\alpha-1) /(2p)})
\end{equation}
\[
    = \mathbf{f}_{\alpha }^{n+1/2}, \quad  \alpha = 1,2,\dots,2p,
\]
where
$\tilde{\mathbf{A}}^{(\alpha )} = \mathbf{A}^{(\alpha )}, \ \alpha = 1,2,...,p$,
$\tilde{\mathbf{A}}^{(\alpha )} = \mathbf{A}^{(2p+1-\alpha )}, \ \alpha = p+1,p+2,...,2p$ è
\[
  \mathbf{f}^{n+1/2} = \sum_{\alpha =1}^{2p-2}
  \mathbf{f}_{\alpha }^{n+1/2} .
\]

The proof of stability and convergence is conducted in the standard way, the technical details can be found, for example, in  \cite{samarskii1999additive}. 
Scalarly multiplying the equations of scheme (\ref{7.7}) by $\mathbf{y}^{n+\alpha /(2p)} + \mathbf{y}^{n+(\alpha-1) /(2p)}$,
we obtain  the corresponding analog of a priori estimate (\ref{7.3}). 

\begin{theorem}
\label{t-6}
Additive operator-difference scheme of summarized  approximation (\ref{7.7}) is
 unconditionally stable and approximates the system of equations (\ref{7.1})  with the second order relative to $\tau$.
\end{theorem}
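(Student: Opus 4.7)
The plan is to establish unconditional stability by a discrete energy argument that mirrors the continuous estimate (\ref{7.3}), and then to obtain second-order accuracy by exploiting the symmetric palindromic structure of the fractional steps.

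For stability I would scalarly multiply the $\alpha$-th equation of (\ref{7.7}) in $\mathbf{H}$ by $\mathbf{u}^{n+\alpha/(2p)} + \mathbf{u}^{n+(\alpha-1)/(2p)}$. Since $\mathbf{B} = \mathbf{B}^{*}>0$ and is independent of the fractional index, the time-derivative term telescopes as
\[
 \frac{1}{\tau}\left[(\mathbf{B}\mathbf{u}^{n+\alpha/(2p)},\mathbf{u}^{n+\alpha/(2p)}) - (\mathbf{B}\mathbf{u}^{n+(\alpha-1)/(2p)},\mathbf{u}^{n+(\alpha-1)/(2p)})\right].
\]
The operator contribution equals $\frac{1}{2}(\tilde{\mathbf{A}}^{(\alpha)}\mathbf{v},\mathbf{v})$ with $\mathbf{v} = \mathbf{u}^{n+\alpha/(2p)} + \mathbf{u}^{n+(\alpha-1)/(2p)}$; since every $\mathbf{A}^{(\alpha)}$ in the splittings (\ref{7.5})--(\ref{7.6}) has nonnegative symmetric part (the off-diagonal blocks with $A_\alpha,-A_\alpha^*$ contribute zero, and the diagonal blocks $k^{-1}$ or $0.5k^{-1}$ are positive semidefinite), this quantity is nonnegative and can be dropped. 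Summing over $\alpha=1,\dots,2p$ telescopes the $\mathbf{B}$-terms into $(\mathbf{B}\mathbf{u}^{n+1},\mathbf{u}^{n+1}) - (\mathbf{B}\mathbf{u}^{n},\mathbf{u}^{n})$, while the forcing is absorbed via Cauchy--Schwarz and Young's inequality, e.g.\ $2(\mathbf{f}^{n+1/2}_{\alpha},\mathbf{v}) \leq \varepsilon(\mathbf{B}\mathbf{v},\mathbf{v}) + \varepsilon^{-1}(\mathbf{B}^{-1}\mathbf{f}^{n+1/2}_{\alpha},\mathbf{f}^{n+1/2}_{\alpha})$. A discrete Gronwall step then yields the grid analog of (\ref{7.3}) with growth factor independent of the spatial discretization parameters, which is precisely the required unconditional stability.

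For the second-order accuracy I would substitute the exact solution of (\ref{7.1}) into (\ref{7.7}) and Taylor-expand about $t_{n+1/2}$. Each individual fractional step is a Crank--Nicolson half-step for a one-component subproblem and is second-order consistent by itself; the symmetric palindromic composition $\frac{1}{2}\mathbf{A}^{(1)}\to\cdots\to\frac{1}{2}\mathbf{A}^{(p)}\to\frac{1}{2}\mathbf{A}^{(p)}\to\cdots\to\frac{1}{2}\mathbf{A}^{(1)}$ is exactly the Strang-splitting pattern cast in operator-difference form, so the doubling of the endpoint operator together with the reverse ordering forces the first-order commutator defects $\tau[\mathbf{A}^{(\alpha)},\mathbf{A}^{(\beta)}]$ to cancel, leaving a local truncation error of order $\tau^{3}$ and hence a global error of $O(\tau^{2})$ after one application of the stability bound to the error equation.

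The main technical hurdle will be the commutator bookkeeping. The stability half is essentially automatic once one notices the nonnegativity of each $\tilde{\mathbf{A}}^{(\alpha)}$, but verifying the second-order cancellation requires expanding the product $\prod_{\alpha=1}^{2p}\left(\mathbf{B}+\frac{\tau}{2}\tilde{\mathbf{A}}^{(\alpha)}\right)^{-1}\left(\mathbf{B}-\frac{\tau}{2}\tilde{\mathbf{A}}^{(\alpha)}\right)$ and checking that the palindromic ordering kills the antisymmetric $O(\tau^{2})$ piece in its expansion about $\exp(-\tau\mathbf{B}^{-1}\mathbf{A})$; the source splitting $\mathbf{f}^{n+1/2}=\sum_{\alpha}\mathbf{f}^{n+1/2}_{\alpha}$ must likewise be arranged so as not to destroy this consistency. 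This is standard machinery in the theory of regularized/summarized-approximation additive schemes (cf.\ the references cited after (\ref{7.7})), and the argument reduces to invoking that calculus for the operators $\mathbf{A}^{(\alpha)}$ constructed in (\ref{7.5})--(\ref{7.6}).
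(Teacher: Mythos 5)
Your proposal is correct and follows essentially the same route as the paper, which itself only sketches the argument: scalar multiplication of each fractional-step equation by $\mathbf{u}^{n+\alpha/(2p)}+\mathbf{u}^{n+(\alpha-1)/(2p)}$, nonnegativity of each $\tilde{\mathbf{A}}^{(\alpha)}$ to drop the operator term, telescoping of the $\mathbf{B}$-energy to get the grid analog of (\ref{7.3}), with second-order summarized approximation attributed to the symmetric (Strang-type) ordering of the half-operators. You merely fill in the standard details that the paper defers to the cited literature.
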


The computational implementation of the considered additive schemes is much 
simpler than for schemes (\ref{5.1}), (\ref{5.2}) or (\ref{5.11}), (\ref{5.15}).
To explain this fact, we consider, for example, the first step in scheme (\ref{7.7}) 
with splitting (\ref{7.5}),  where  
\[
   \left ( \mathbf{B} + \frac{\tau }{2} \mathbf{A}^{(1)}\right ) 
   \mathbf{u}^{n+1/6} 
   = \mathbf{r}^{n+1/2} 
\]
for  given right hand side $\mathbf{r}^{n+1/2}$.
In the coordinate-wise form of this equation we have the system of equations  
\[
  \nu u_1^{n+1/6} + \frac{\tau }{2} k A_1  y_3^{n+1/6} =  k r_1^{n+1/2},
  \quad \nu u_2^{n+1/6} =  k r_2^{n+1/2},
\]
\[
  c u_3^{n+1/6} - \frac{\tau }{2} A_1^*  u_1^{n+1/6} =  r_3^{n+1/2}.
\]
Substituting $u_1^{n+1/6}$ from  the first equation into the third one, we obtain  
\[
  \nu c u_3^{n+1/6} + \frac{\tau^2 }{4} A_1^* k A_1 u_3^{n+1/6} =  
  \nu r_3^{n+1/2} + \frac{\nu \tau }{2} A_1^* k r_1^{n+1/2}.
\]
Thus we must solve the one-dimensional grid problems with a single operator, 
which are connected with operators  $A_1, A^*_1$.
We have a similar realization for splitting (\ref{7.6}).

Among  shortcomings of the constructed locally one-dimensional schemes (\ref{7.7})  
it should be noted the lack of transparency (each individual equation does not 
approximate the differential problem) as well as the relative difficulty of obtaining
and studying schemes of increased  approximation order. 
It is possible to construct for system (\ref{3.15}), (\ref{3.16}) another splitting schemes
which belong to the class of regularized additive operator-difference schemes  \cite{samarskii1999additive}. 

Regularized additive schemes can be constructed on the basis of  scheme (\ref{5.11}), (\ref{5.15}). 
Instead of (\ref{5.15})  we use the difference equation 
 \begin{equation}\label{7.8}
   c^{1/2} Q c^{1/2} \,
   \frac{u^{n+1} - u^{n} }{\tau} - 
   \sum_{\alpha =1}^{2} A^*_{\alpha} q_{\alpha}^{n+1/2} = 
   f^{n+1/2} ,
\end{equation}
where the factorized operator $Q$ is defined according to (\ref{6.5}).
Similar to Theorem~\ref{t-3},  we can prove the following statement involving estimate $Q > E + \sigma \tau^2 D$. 

\begin{theorem} 
\label{t-7} 
Additive-difference scheme (\ref{5.11}), (\ref{6.5}), (\ref{7.8}) is unconditionally stable at $\nu c_0 \sigma \geq 0.25$,
and  estimate (\ref{4.4}) holds for the difference solution, where 
 \[
  S^n = \left (\left (
  \nu c^{1/2}Q c^{1/2} - 
  \frac{\tau^2}{4}  D \right )\frac{u^n - u^{n-1}}{\tau }, 
  \frac{u^n - u^{n-1}}{\tau } \right ) 
\]
\[
  + \left (D \frac{u^n + u^{n-1}}{2}, \frac{u^n + u^{n-1}}{2} \right ) .
\]
\end{theorem}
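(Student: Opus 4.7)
The plan is to follow the strategy of Theorem~\ref{t-3}: eliminate the fluxes from the coupled system (\ref{5.11}), (\ref{7.8}) to collapse it into a single three-level scalar equation for $u^n$, establish positivity of the corresponding leading operator, and then run the energy argument already used in Theorem~\ref{t-1}.

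First, I would eliminate the fluxes. Writing (\ref{7.8}) at levels $n$ and $n-1$, half-sum and half-difference give two equations in which $c^{1/2}Qc^{1/2}$ acts on $(u^{n+1}-u^{n-1})/(2\tau)$ and on $(u^{n+1}-2u^n+u^{n-1})/\tau^2$, together with sums and differences of $A^*_\alpha q^{n\pm 1/2}_\alpha$. Equation (\ref{5.11}) lets me express $(q^{n+1/2}_\alpha+q^{n-1/2}_\alpha)/2$ in terms of the flux time-difference and $kA_\alpha u^n$; substituting and using $D=\sum A^*_\alpha k A_\alpha$ produces exactly the three-level equation
\begin{equation*}
\nu c^{1/2}Qc^{1/2}\,\frac{u^{n+1}-2u^n+u^{n-1}}{\tau^2} + c^{1/2}Qc^{1/2}\,\frac{u^{n+1}-u^{n-1}}{2\tau} + D u^n = \varphi^n,
\end{equation*}
structurally identical to (\ref{5.17}) with $E+\sigma\tau^2 D$ replaced by $Q$.

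Second, I would prove the central operator inequality $Q \geq E+\sigma\tau^2 D$. Setting $P_1=\sigma\tau^2 A^*_1 A_1$ and $P_2=\tfrac12\sigma\tau^2 A^*_2 A_2$, the factored form (\ref{6.5}) reads $Q=(E+P_2)(E+P_1)(E+P_2)$. The one-dimensional operators $A^*_1 A_1$ and $A^*_2 A_2$ are self-adjoint, non-negative, and commute, since on a uniform Cartesian grid they are the second differences in orthogonal coordinate directions. Expanding the triple product and exploiting this commutation gives
\begin{equation*}
Q = E + P_1 + 2P_2 + 2 P_2 P_1 + P_2^2 + P_2 P_1 P_2,
\end{equation*}
where $P_1+2P_2 = \sigma\tau^2(A^*_1 A_1 + A^*_2 A_2)$ and every subsequent term is a product of commuting positive self-adjoint operators, hence itself non-negative. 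Combining with the coefficient bound $D\leq k_1(A^*_1 A_1+A^*_2 A_2)$ from (\ref{3.22}) yields the desired $Q \geq E+\sigma\tau^2 D$.

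Third, I would run the energy argument of Theorem~\ref{t-1}. With $\zeta^n=(u^n+u^{n-1})/2$, $\eta^n=(u^n-u^{n-1})/\tau$ and the identity $u^n=(\zeta^{n+1}+\zeta^n)/2 - (\tau/4)(\eta^{n+1}-\eta^n)$, the three-level equation rewrites as
\begin{equation*}
R\,\frac{\eta^{n+1}-\eta^n}{\tau} + c^{1/2}Qc^{1/2}\,\frac{\eta^{n+1}+\eta^n}{2} + D\,\frac{\zeta^{n+1}+\zeta^n}{2} = \varphi^n,
\end{equation*}
with $R = \nu c^{1/2}Qc^{1/2}-(\tau^2/4)D$. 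The inequality of step two together with the pointwise bound $c\geq c_0$ gives $R>0$ under $\nu c_0\sigma\geq 1/4$, by the same sandwiching argument used in Theorem~\ref{t-3}. Taking the scalar product in $H$ with $\tau(\eta^{n+1}+\eta^n)=2(\zeta^{n+1}-\zeta^n)$, using self-adjointness of $R$, $c^{1/2}Qc^{1/2}$ and $D$, bounding the right-hand side by $\tau(\varphi^n,\eta^{n+1}+\eta^n)\leq \tfrac{\tau}{2}(c(\eta^{n+1}+\eta^n),\eta^{n+1}+\eta^n)+\tfrac{\tau}{2}(c^{-1}\varphi^n,\varphi^n)$, and absorbing the first summand via $c^{1/2}Qc^{1/2}\geq c$ (since $Q\geq E$), one obtains (\ref{4.4}) with the stated $S^n$. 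The main obstacle is step two: $Q$ involves only the coefficient-free operators $A^*_\alpha A_\alpha$ while $D$ carries the variable coefficient $k$, so the inequality $Q\geq E+\sigma\tau^2 D$ does not drop out of the factorization itself but requires both the directional commutativity and the coefficient bound on $k$ --- exactly the same subtlety that is quietly hidden in Theorem~\ref{t-3}.
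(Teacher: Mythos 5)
Your route is the paper's route: the published proof of Theorem~\ref{t-7} is a single sentence --- reduce, exactly as in Theorem~\ref{t-3}, to the three-level scheme (\ref{5.17}) with $E+\sigma\tau^2 D$ replaced by the factorized operator $Q$ of (\ref{6.5}), invoke the bound $Q>E+\sigma\tau^2D$, and repeat the energy identity of Theorem~\ref{t-1}. Your steps one and three (flux elimination and the energy argument, including the absorption of the right-hand side via $c^{1/2}Qc^{1/2}\ge c$, which follows correctly from $Q\ge E$) fill in those omitted details faithfully, and your remark that the expansion of the palindromic product $(E+P_2)(E+P_1)(E+P_2)$ requires the commutativity of $A_1^*A_1$ and $A_2^*A_2$ on the tensor-product grid is a genuine point the paper never states.

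The one place where your argument actually breaks is the last line of step two. From $Q\ge E+\sigma\tau^2(A_1^*A_1+A_2^*A_2)$ and the bound $D\le k_1(A_1^*A_1+A_2^*A_2)$ you conclude $Q\ge E+\sigma\tau^2D$; but that coefficient bound goes the wrong way: it gives $A_1^*A_1+A_2^*A_2\ge k_1^{-1}D$, hence only $Q\ge E+(\sigma/k_1)\tau^2D$, which is the desired inequality only if $k_1\le 1$. What positivity of $R=\nu c^{1/2}Qc^{1/2}-\tfrac{\tau^2}{4}D$ under $\nu c_0\sigma\ge 0.25$ actually requires is $A_1^*A_1+A_2^*A_2\ge D$, i.e.\ a bound $k\le 1$ --- otherwise the stability condition becomes $\nu c_0\sigma\ge k_1/4$, or $Q$ must be redefined with $k$ inside the one-dimensional factors. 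To be fair, the paper asserts $Q>E+\sigma\tau^2D$ with no justification and carries the same mismatch between coefficient-free regularizers and the weighted operator $D$ (compare (\ref{5.15}) with (\ref{5.16})), so you have reproduced its gap rather than introduced a new one; but your claim that the combination ``yields the desired'' inequality is false as stated, and you explicitly flagged this subtlety only to resolve it with the inequality applied in the reverse direction. A secondary loose end, also inherited from Theorem~\ref{t-3}, is the comparison of $c^{1/2}(A_1^*A_1+A_2^*A_2)c^{1/2}$ with $c_0(A_1^*A_1+A_2^*A_2)$ for variable $c$, which does not follow from $c\ge c_0$ alone since multiplication by $c^{1/2}$ does not commute with $A_\alpha$.
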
 

Additive scheme (\ref{5.11}), (\ref{6.5}), (\ref{7.8}) 
is based on the perturbation of  the operator at the time derivative in the last equation of system (\ref{5.11}), (\ref{5.12})
(difference derivative of the temperature).   
It is interesting to consider the schemes with the perturbation of difference derivatives in time for heat fluxes.   
Instead of (\ref{5.11}) we use the difference equations  
\begin{equation}\label{7.9}
   (k^{-1} E + \sigma \tau^2  A_{\alpha} A^*_{\alpha} )
   \frac{q^{n+1/2}_{\alpha} + q^{n-1/2}_{\alpha} }{2}  
\end{equation}
\[
   + \nu (k^{-1} + \sigma \tau^2 A_{\alpha} A^*_{\alpha} )
   \frac{q^{n+1/2}_{\alpha} - q^{n-1/2}_{\alpha} }{\tau}  
   + A_{\alpha} u^{n} = 
   0, \quad  \alpha =1,2 .
\]
Difference equations (\ref{7.9}) can be written in the form 
\[
   \frac{q^{n+1/2}_{\alpha} + q^{n-1/2}_{\alpha} }{2}  
   + \nu \frac{q^{n+1/2}_{\alpha} - q^{n-1/2}_{\alpha} }{\tau}  
\]
\[
   + (k^{-1} + \sigma \tau^2 A_{\alpha} A^*_{\alpha} )^{-1}
   A_{\alpha} u^{n} = 0, 
   \quad  \alpha =1,2 
\]
with treating them as the multiplicative perturbation of operators  
$A_{\alpha}, \ \alpha =1,2$ â (\ref{5.11}).

It is easy to see by direct calculations that  additive scheme (\ref{5.12})(\ref{7.9}) 
corresponds to scheme (\ref{6.7})(\ref{6.9}).

\begin{theorem} 
\label{t-8} 
Additive-difference scheme (\ref{5.12}), (\ref{7.9}) is unconditionally stable at  $\nu c_0 \sigma \geq 0.5$,
and  estimate (\ref{6.10}) holds for the difference solution under the condition of
 setting operators $C$  and $R$  according to (\ref{6.7})  and (\ref{6.8}), respectively.  
\end{theorem}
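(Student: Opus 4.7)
The plan is to show that scheme (\ref{5.12}), (\ref{7.9}) is algebraically equivalent to the scalar splitting scheme (\ref{6.9}) with $C$ given by (\ref{6.7}), and then to quote Theorem~\ref{t-5} directly. The route is precisely the one sketched in the derivation of (\ref{5.13}) from (\ref{5.11}), (\ref{5.12}), but applied to the regularized flux equation instead of the unregularized one.

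First I would invert the factorized operator $k^{-1}E + \sigma\tau^2 A_\alpha A^*_\alpha$ on both summands of (\ref{7.9}) (as the authors already do just after (\ref{7.9})) to obtain the rewritten form
\[
   \frac{q^{n+1/2}_\alpha + q^{n-1/2}_\alpha}{2}
   + \nu\,\frac{q^{n+1/2}_\alpha - q^{n-1/2}_\alpha}{\tau}
   + (k^{-1}E + \sigma\tau^2 A_\alpha A^*_\alpha)^{-1} A_\alpha u^{n} = 0,
\]
valid because the operators $k^{-1}E$ and $A_\alpha A^*_\alpha$ are selfadjoint and positive (the first strictly so), hence invertible and commuting.

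Next I would write (\ref{5.12}) at the two successive time levels $n$ and $n-1$, add them to get an equation involving $(u^{n+1}-u^{n-1})/(2\tau)$ and the half-sum $(q^{n+1/2}_\alpha + q^{n-1/2}_\alpha)/2$, and subtract them to get an equation involving $(u^{n+1}-2u^n+u^{n-1})/\tau^2$ and the discrete derivative $(q^{n+1/2}_\alpha - q^{n-1/2}_\alpha)/\tau$. Multiplying the latter by $\nu$ and combining with the former eliminates the pair bracket $\tfrac12(q^{n+1/2}_\alpha+q^{n-1/2}_\alpha)+\nu(q^{n+1/2}_\alpha-q^{n-1/2}_\alpha)/\tau$ in favour of $A^*_\alpha(k^{-1}E+\sigma\tau^2 A_\alpha A^*_\alpha)^{-1}A_\alpha u^n$ via the rewritten (\ref{7.9}); summing over $\alpha$ and recognizing the result as $C u^n$ with $C$ from (\ref{6.7}) gives exactly
\[
  \nu c\,\frac{u^{n+1}-2u^n+u^{n-1}}{\tau^2}
  + c\,\frac{u^{n+1}-u^{n-1}}{2\tau} + C u^{n} = \varphi^n,
\]
with $\varphi^n = \tfrac12(f^{n+1/2}+f^{n-1/2}) + \nu(f^{n+1/2}-f^{n-1/2})/\tau$. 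This is precisely scheme (\ref{6.9}). Applying Theorem~\ref{t-5} yields unconditional stability under $\nu c_0 \sigma \ge 0.5$ and the estimate (\ref{4.4}) with $S^n$ given by (\ref{6.10}) and $R$ by (\ref{6.8}), as claimed.

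The only nontrivial step is the elimination: one must be careful that the inversion performed on (\ref{7.9}) commutes with $A^*_\alpha$ in (\ref{5.12}) (which it does, since the factors in (\ref{6.7}) are built out of the same $A_\alpha, A^*_\alpha$) and that the discrete time-derivative identities on $u$ are applied consistently at levels $n$ and $n-1$. Once the bookkeeping of these identities is complete, the reduction to (\ref{6.9}) is exact, and no fresh energy argument is required beyond the one already carried out for Theorem~\ref{t-5}.
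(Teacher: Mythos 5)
Your proposal is correct and follows exactly the route the paper intends: the text preceding Theorem~\ref{t-8} asserts that scheme (\ref{5.12}), (\ref{7.9}) ``corresponds to'' scheme (\ref{6.7}), (\ref{6.9}) by direct calculation, and your elimination of the fluxes via the half-sum and divided difference of (\ref{5.12}) at levels $n$ and $n-1$, combined with the inverted form of (\ref{7.9}), supplies precisely that calculation before invoking Theorem~\ref{t-5}. The only superfluous remark is the worry about the inverse commuting with $A^*_\alpha$: no commutation is needed, since $A^*_\alpha$ simply acts on the substituted bracket to produce $C_\alpha u^n$ directly.
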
 

In contrast to the scheme of summarized  approximation (\ref{7.7}), operator-difference splitting scheme (\ref{5.11}), (\ref{7.9})
has a clear and transparent structure.

\end{document}